\DeclareMathOperator{\R}{\mathbb{R}}
\renewcommand{\phi}{\varphi}
\newtheorem{theorem}{\textbf{Theorem}}
\newtheorem{corollary}{\textbf{Corollary}}
\newtheorem{definition}{\textbf{Definition}}
\newtheorem{example}{\textit{Example}}
\title{An approximation scheme for an Eikonal Equation with discontinuous coefficient \thanks{This 
        work was supported by the European Union under the 7th Framework Programme FP7-PEOPLE-2010-ITN SADCO, Sensitivity Analysis for Deterministic Controller Design.}}
\author{
      Adriano Festa\thanks{Imperial College of London, EEE Department}  \and  Maurizio Falcone \thanks{SAPIENZA -  Universit\`a di Roma, Dipartimento di Matematica}}
\begin{document}

\maketitle

\begin{abstract}
We consider the stationary Hamilton-Jacobi equation
\begin{equation}
b_{ij}(x)u_{x_i}u_{x_j}=\left[f(x)\right]^2      \nonumber
\end{equation} 
 where $b$ can vanish  at some points, the right-hand side $f$ is  strictly positive and  is allowed to be discontinuous. More precisely, we consider special class of discontinuities for which the notion of viscosity solution is well-suited.  We propose a semi--Lagrangian scheme for the numerical approximation of  the viscosity solution in the sense of Ishii and we study its properties. We also prove an {\it a-priori} error estimate for the scheme in $L^1(\Omega)$. The last section contains some applications to control and image processing problems.
\par
 \emph{  Hamilton-Jacobi equation,  discontinuous Hamiltonian, viscosity solutions,  semi-- Lagrangian schemes,  {\em a-priori} error estimates.}
\par 
35F30, 35R05, 65N15
\end{abstract}

\pagestyle{myheadings}
\thispagestyle{plain}
\markboth{A. Festa and M. Falcone}{Approximation of a discontinuous Eikonal Equation}

\section{Introduction}
In this paper we study the following boundary value problem. Let $\Omega\subset \R^N$ be an open bounded domain with a Lipschitz boundary $\partial \Omega$, we consider the Dirichlet problem
\begin{equation}\label{EK}
\left\{
\begin{array}{ll}
b_{ij}(x)u_{x_i}u_{x_j}=\left[f(x)\right]^2 & x\in \Omega\\
u(x)=g(x)& x\in \partial \Omega
\end{array}
\right.
\end{equation} 
where $f$ and $g$ are given functions. We focus to the fact that the function $f$ is Borel measurable, possibly discontinuous.\par 
In the  most classical case, the matrix $(b_{ij})$ is the identity matrix and $f$ is a positive function, so the partial differential equation in (\ref{EK}) reduces to
\begin{equation} \label{EK2} 
|Du(x)|=f(x).
\end{equation}
which is the classical form of an eikonal equation.\par

This equation arises in the study of many problems, e.g.  in geometrical optics, computer vision, control theory and robotic navigation. 
In geometrical optics, to describe the propagation of light the eikonal equation appears in the form
\begin{equation}
   \sum_{i,j=1}^N b_{ij}(x)u_{x_i}u_{x_j}(x)=f(x)
\end{equation}
where $b=\sigma \sigma^t$ and $f$ has the meaning of the refraction index of the media where the light rays are passing. Typically, the refraction law applies across surfaces of discontinuity of $f$.\\
 Another example is offered by a classical problem in computer vision,  the Shape-from-Shading model. In this classical inverse problem we come up with the equation
\begin{equation}
\sqrt{1+|Du(x)|^2}=\frac{1}{I(x)}
\end{equation}
if we assume that the light source is vertical and at infinity (so all the rays are parallel) and the object to reconstruct is the graph of the unknown function $u$. In this particular case the brightness $I(x)\in (0,1]$, i.e.  the intensity of light reflected by the object,  can be discontinuous when the object has edges as we will see in more details at the end of this paper. Using classical tools of convex analysis, both equations above can be rewritten in the form (\ref{EK}).   

Another motivation to deal with discontinuous hamiltonians comes directly from  control theory.  In this framework discontinuous functions can be used to represent targets (for example using  $f$ as a characteristic function) and/or state constraints (using  $f$ as an indicator function) \cite{BFZ10}. Clearly, the well-posedness of (\ref{EK}) in the case of continuous $f$ follows from the theory of viscosity solutions for HJ equations, the interested reader can find  the details  in  \cite{B98} and \cite{BCD97} where there are summarized the well-known results introduced by Crandall, Lions, Ishii and other authors. It is interesting to point out that, when the hamiltonian is discontinuous, the knowledge of  $f$ at every point will not guarantee the well-posedness of the problem even in the framework of viscosity solutions. In fact, for equation (\ref{EK}) it can be easily observed  that, even when $f$ is defined point wise and has appropriate discontinuities,  the  value function for the corresponding control problem will not satisfy the equation in the viscosity sense. In order to define viscosity solutions for this case, we use appropriate semicontinuous envelopes of $f$, following some   ideas  introduced by Ishii in \cite{I85}.\par

The notion of viscosity solution in the case of discontinuous Hamiltonian was proposed by Ishii in \cite{I85} where he presents some existence and regularity results. Other results of well-posedness of Hamilton-Jacobi equations in presence of discontinuous coefficients are been presented by various authors in several works (see \cite{BJ90,F93,B93,DMF00}) and in the specific case, present in many applications, of the eikonal equation \cite{T92,NS95}. \par
Our primary goal is to prove convergence for a semi-Lagrangian scheme which has shown to be rather effective in the approximation of Hamilton--Jacobi equations. The results which have been proved for this type of schemes work for convex and non convex hamiltonians but use the uniform continuity of the hamitonian. Moreover, the typical convergence result is given for the  $L^\infty$ norm which is rather natural when dealing with classical viscosity solutions (see e.g. the result by Crandall and Lions \cite{CL84}, Barles and Souganidis \cite{BS91} and the monograph by Falcone and Ferretti \cite{FF13}).  For classical viscosity solutions, at our knowledge, the only two convergence results in $L^1(\Omega)$ has been proved by Lin and Tadmor \cite{T91, LT01} for a central finite difference scheme and by Bokanowwsky et al. \cite{BFZ10} in dimension one. We have also to mention the level set approach for discontinuous solutions proposed by Tsai et al. \cite{TGO01}. Although classical schemes tailored for the the approximation of regular cases with convex hamiltonians can give reasonable results also for some discontinuous hamiltonians, it would be interesting to have a theoretical results in this situation. Deckelnick and Elliott \cite{DE04} have studied a problem where the solution is still Lipschitz continuous although the hamiltonian is discontinuous. In particular, they have proposed a finite difference scheme for  the approximation of (\ref{EK2}) and their scheme is very similar to a finite difference schemes usually applied for regular hamiltonians.  Their contribution is interesting because they prove  an {\em a-priori} error estimate  in $L^\infty(\Omega)$. \par
Although our work  has been also  inspired by their results, we use different techniques and our analysis is devoted to a scheme of semi--Lagrangian type. The benefits of a semi--Lagrangian scheme with respect to a finite differences scheme are  a better ability to follow the  informations driven by the characteristics, the fact that one can use a larger time-step in evolutive problems still having stability and the fact that SL-schemes do not require a structured grid. This peculiarities give us a faster and more accurate approximation in many cases  as it has been reported in the literature (see e.g.  \cite{FF02, CF07} or appendix A of \cite{BCD97}). It is also  important to note that  we prove an {\em a-priori} error estimate which improves the result in \cite{DE04} because  we consider a more general case (\ref{EK}) where also discontinuous viscosity solutions are accepted. 

This paper is organized as follows.\\
  In Section \ref{sec:teo} we recall some definitions and theoretical  results available for discontinuous hamiltonian. Section \ref{sec:semiapprox} is devoted to the presentation of the scheme and to the proof of some properties which will be used in the proof of convergence. In Section \ref{sec:conv} we prove  convergence and establish an {\em a-priori} error estimate giving the rate of convergence in the $L^1(\Omega)$ norm. Finally, in Section \ref{sec:tests} we present our numerical experiments  dealing with control and image processing problems.
\section{The model problem and previous results} \label{sec:teo}
We present, for readers convenience,  some results of well-posedness mainly taken from a work of Soravia \cite{S06}. We also introduce our assumptions, which are summarized below. \par
The boundary data
\begin{equation}\label{H1}
g:\partial \Omega \rightarrow [0,+\infty[ \hbox{ is continuous},
\end{equation}
the matrix of the coefficients satisfies
\begin{equation}\label{H2}
(b_{ij})=(\sigma_{ik})\cdot(\sigma^t_{kj}) 
\end{equation}
where $i,j=1,\dots, N$ and $k=1,\dots, M$ and $(M\leq N)$.
Then $(b_{ij})$  is a symmetric,  positive semidefinite and possibly degenerate matrix, 
\begin{equation}\label{H3}
\sigma(\cdot)\equiv (\sigma_{ik})_{i=1,...N;\phantom{g} k=1,...M}:\overline{\Omega}\rightarrow \R^{NM} \hbox{ is L-Lipschitz continuous.}
\end{equation}
moreover, the function $f:\R^N\rightarrow[\rho,+\infty[$, $\rho>0$ is Borel measurable and possibly discontinuous. \par
We can give an optimal control interpretation of (\ref{EK}), rewriting the differential operator in the following form
\begin{equation}\label{max}
b_{ij}(x)p_i p_j=\sum_{k=1}^M (p \cdot \sigma_k(x))^2=|p\cdot\sigma(x)|^2,
\end{equation}
where  the columns of the matrix $(\sigma_{ik})_{i,k}$ are the vector fields which will be denoted by  $\sigma_k:\Omega\rightarrow \R^N$, $k=1,...M$. We define $M_\sigma=\max_{i}\Sigma_k \sigma_{i,k}$. In this way the eikonal equation (\ref{EK}) becomes, for $a=(a^1,...a^M)\in \R^M$, the following Bellman equation
\begin{equation}\label{BL}
    \max_{|a|\leq 1}\left\{ -Du(x)\cdot \sum_{k=1}^M a^k \sigma_k(x)\right\}=f(x)
\end{equation}
associated to the symmetric controlled dynamics 
\begin{equation}\label{eq:dyn}
    \dot{y}=\sum_{k=1}^M a^k \sigma_k(y), \quad y(0)=x,
\end{equation}
where the measurable functions $a:[0,+\infty[\rightarrow \{a\in \R^M:|a|\leq 1\}$ are the controls. We will denote in the sequel  by  $y_x(\cdot):= y_x(\cdot,a)$ the solutions of \eqref{eq:dyn}. In this system, optimal trajectories  are the geodesics associated to the metric defined by the matrix $(b_{ij})$, and they are straight lines when $(b_{ij})$ is the  identity matrix. Solution of the equation (\ref{BL}) minimize the following functional
\begin{equation}
   J(x,a(\cdot))=\int_{0}^{\tau_x}f(y(t))dt +g(y(\tau_x))
\end{equation}
where $\tau_x(a(\cdot))=\inf\{t:y_x(t,a)\notin\Omega\}$.\par 
Let us  introduce the concept of {\em discontinuous viscosity solution} for (\ref{EK}) introduced by Ishii in \cite{I85}.\par 
Let $f$ be bounded in $\Omega$  and let
\begin{eqnarray}
 f_*(x)=\lim_{r\rightarrow 0^+ }\inf\{f(y):|y-x|\leq r\}\\
 f^*(x)=\lim_{r\rightarrow 0^+ }\sup\{f(y):|y-x|\leq r\}
 \end{eqnarray} 
$f_*$ and $f^*$ are respectively the  lower semicontinuous  and the upper semicontinuous envelope of $f$.
\begin{definition}
A lower (resp. upper) semicontinuous function $u:\Omega\rightarrow \R\cup \{+\infty\}$ (resp. $u:\Omega \rightarrow \R$) is a viscosity super- (resp. sub-) solution of the equation (\ref{EK}) if for every $\phi\in C^1(\Omega)$, $u(x)<+\infty$, and $x\in {argmin}_{x\in\Omega}(u-\phi)$, (resp. $x\in {argmax}_{x\in\Omega}(u-\phi)$), we have
$$ b_{ij}(x)\phi_{x_i}(x)\phi_{x_j}(x)\geq \left[f_*(x)\right]^2, \quad \hbox{(resp. }b_{ij}(x)\phi_{x_i}(x)\phi_{x_j}(x)\leq \left[f^*(x)\right]^2 \hbox{).}$$
A function $u$ is a discontinuous viscosity solution of  (\ref{EK}) if $u^*$ is a subsolution and $u_*$ is a supersolution. 
\end{definition}
\par 
We remind also that the Dirichlet condition is satisfied in the following weaker sense
\begin{definition}
An upper semicontinuous function $u:\overline{U}\rightarrow \R$, subsolution of  (\ref{EK}), satisfies the Dirichlet type boundary condition in the viscosity sense
if for all $\phi\in C^1$ and $x\in\partial \Omega$, $x\in{argmax}_{x\in\overline{\Omega}}(u-\phi)$ such that $u(x)>g(x)$, then we have
$$b_{ij}(x)\phi_{x_i}\phi_{x_j}\leq \left[f^*(x)\right]^2.$$
Lower semicontinuous functions that satisfy a Dirichlet type boundary condition are defined accordingly.
\end{definition}

In order to see how easily uniqueness can fail without proper assumptions on $f$, now that we accepted that envelopes of function should be used let us consider  the 1D equation
\begin{equation}\label{eqese}
   |u'(x)|=f(x), \quad x\in [-2,2], \quad u(-2)=u(2)=0. 
\end{equation}
 with the choice $f(x)=2 \chi_{\bf Q}$, where $\chi_{\bf Q}$ is the characteristic function of the rationals. Then one easily checks that both $u_1\equiv 0$ and $u_2=2-2|x|$ are viscosity solutions. It is clear, that in general we do not have uniqueness of the discontinuous viscosity solution. We add a key assumption on the coefficient $f$.\par

{\sc Assumption A1.} Let us assume that there exist $\eta>0$ and $K\geq 0$ such that for every $x\in\Omega$ there is a direction $n=n_x\in S^{n-1}$ with 
\begin{equation} \label{nocusp}
f(y+rd)-f(y)\leq Kr
\end{equation}
for every $y\in\Omega$, $d\in S^{n-1}$, $r>0$ with $|y-x|<\eta$, $|d-n|<\eta$ and $y+rd\in\Omega$.

%
%

Under Assumption A1 the following comparison theorem holds. This result, under some more general hypotheses, is presented in \cite{S06}.

\begin{theorem}\label{teo:comp}
Let $\Omega$ be an open domain with Lipschitz boundary. Assume (\ref{H1}), (\ref{H2}), (\ref{H3}) and (\ref{nocusp}). Let $u,v:\overline{\Omega}\rightarrow \R$ be respectively an upper and a lower--semicontinuous function, bounded from below, respectively a subsolution and a supersolution of 
$$b_{i,j}(x)u_{x_i}u_{x_j}=\left[f(x)\right]^2, \quad x\in\Omega$$
Let us assume  that $v$ restricted to $\partial \Omega$ is continuous and that $u$ satisfies the Dirichlet type boundary condition.
Suppose moreover that $u$ or $v$ is Lipschitz continuous. Then $u\leq v$ in $\overline{\Omega}$.
\end{theorem}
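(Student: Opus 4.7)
The plan is the classical doubling-of-variables argument for viscosity comparison, modified in two ways to accommodate the discontinuity of $f$: the quadratic penalisation is shifted in the preferred direction $n_x$ provided by Assumption A1, and the Lipschitz regularity of $u$ or $v$ is used to bound the resulting momenta. I would argue by contradiction: set $M := \sup_{\overline{\Omega}}(u-v)$ and assume $M > 0$. Using the Lipschitz hypothesis on $u$ or $v$, the continuity of $v|_{\partial\Omega}$, and the Dirichlet-type boundary inequality for $u$, one first rules out that the supremum is attained on $\partial\Omega$, so that an interior maximiser $x_0 \in \Omega$ with $(u-v)(x_0) = M$ exists and the analysis can be localised in a small neighbourhood of $x_0$. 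Set $n := n_{x_0}$, the direction furnished by (\ref{nocusp}).

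Next I would introduce the auxiliary function
\[ \Phi_{\varepsilon,\alpha,\delta}(x,y) = u(x) - v(y) - \frac{|x - y - \alpha n|^2}{2\varepsilon} - \delta\bigl(|x-x_0|^2 + |y-x_0|^2\bigr), \]
for small positive $\varepsilon, \alpha, \delta$, and let $(x_\varepsilon, y_\varepsilon)$ be a maximiser of $\Phi_{\varepsilon,\alpha,\delta}$ over a closed neighbourhood of $(x_0,x_0)$. Standard penalisation estimates give $x_\varepsilon, y_\varepsilon \to x_0$ as $\varepsilon \to 0$, the quotient $p_\varepsilon := (x_\varepsilon - y_\varepsilon - \alpha n)/\varepsilon$ remains bounded uniformly in $\varepsilon$ (thanks to the Lipschitz hypothesis), and one may write $x_\varepsilon = y_\varepsilon + r_\varepsilon d_\varepsilon$ with $r_\varepsilon \to \alpha$ and $d_\varepsilon \to n$. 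Testing the subsolution inequality for $u$ at $x_\varepsilon$ and the supersolution inequality for $v$ at $y_\varepsilon$ with the natural test functions coming from $\Phi_{\varepsilon,\alpha,\delta}$, and using (\ref{max}), one obtains
\[ |p_\varepsilon \cdot \sigma(x_\varepsilon)| \leq f^*(x_\varepsilon) + C\delta, \qquad |p_\varepsilon \cdot \sigma(y_\varepsilon)| \geq f_*(y_\varepsilon) - C\delta. \]

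Subtracting and exploiting the $L$-Lipschitz continuity of $\sigma$ from (\ref{H3}) yields
\[ f_*(y_\varepsilon) - f^*(x_\varepsilon) \leq L|p_\varepsilon||x_\varepsilon - y_\varepsilon| + 2C\delta. \]
Assumption A1 now supplies the matching bound in the opposite sense: since $x_\varepsilon = y_\varepsilon + r_\varepsilon d_\varepsilon$ with $(r_\varepsilon, d_\varepsilon)$ close to $(\alpha, n)$, lifting (\ref{nocusp}) to the envelopes $f^*$, $f_*$ along approximating sequences gives $f^*(x_\varepsilon) - f_*(y_\varepsilon) \leq K r_\varepsilon + o(1)$. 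Combining this with $|x_\varepsilon - y_\varepsilon| = \alpha + o(1)$ and feeding it back into the viscosity inequalities, I would send first $\varepsilon \to 0$, then $\alpha \to 0$ and finally $\delta \to 0$; the right-hand side vanishes, while the penalisation machinery together with $\Phi_{\varepsilon,\alpha,\delta}(x_\varepsilon, y_\varepsilon) \geq \Phi_{\varepsilon,\alpha,\delta}(x_0, x_0) \to M > 0$ forces a strictly positive gap, yielding the required contradiction.

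The main obstacle I foresee is the passage of Assumption A1 from $f$ to its envelopes $f^*$, $f_*$ evaluated at the pair $(x_\varepsilon, y_\varepsilon)$: this has to be done along suitable sequences realising the $\limsup$ and $\liminf$, exploiting the neighbourhood clauses $|y-x|<\eta$ and $|d-n|<\eta$ of (\ref{nocusp}) also to absorb the mismatch between $n_{x_\varepsilon}$ and the frozen direction $n_{x_0}$. A secondary technical point is the exclusion of boundary maxima, which requires combining the Dirichlet-type subsolution condition for $u$ with the Lipschitz regularity of $u$ or $v$ and the strict lower bound $f \geq \rho > 0$ to construct a local barrier near $\partial \Omega$.
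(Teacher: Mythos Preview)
The paper does not prove this theorem at all: it is stated as a recall of a result of Soravia \cite{S06}, with the sentence ``This result, under some more general hypotheses, is presented in \cite{S06}'' immediately preceding it, and no proof is given. So there is nothing in the paper to compare your argument against. Your outline is nonetheless the right one and is essentially Soravia's: the shifted penalty $|x-y-\alpha n|^2/(2\varepsilon)$ is exactly what forces $x_\varepsilon - y_\varepsilon$ to lie close to the direction $n$ supplied by (\ref{nocusp}), so that $f^*(x_\varepsilon)-f_*(y_\varepsilon)$ can be controlled, and the Lipschitz assumption on $u$ or $v$ is what keeps $p_\varepsilon$ bounded.

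There is, however, a genuine gap in your contradiction step. The equation $b_{ij}u_{x_i}u_{x_j}=f^2$ has no zeroth-order term in $u$, so the viscosity inequalities you obtain,
\[
|p_\varepsilon\cdot\sigma(x_\varepsilon)|\le f^*(x_\varepsilon)+C\delta,\qquad
|p_\varepsilon\cdot\sigma(y_\varepsilon)|\ge f_*(y_\varepsilon)-C\delta,
\]
combined with (\ref{H3}) and (\ref{nocusp}), only yield that $f_*(y_\varepsilon)-f^*(x_\varepsilon)$ and $f^*(x_\varepsilon)-f_*(y_\varepsilon)$ are both $o(1)$; the value $M=\sup(u-v)>0$ never appears in these inequalities, so your final claim that ``$M>0$ forces a strictly positive gap'' does not follow. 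What is missing is the standard strict-subsolution device for $1$-homogeneous Hamiltonians: replace $u$ by $\lambda u$ with $\lambda\in(0,1)$; then $|\sigma^T D(\lambda u)|\le \lambda f^*\le f^*-(1-\lambda)\rho$ since $f\ge\rho>0$, so $\lambda u$ is a \emph{strict} subsolution with gap $(1-\lambda)\rho$. The doubling argument now gives $(1-\lambda)\rho\le$ (terms vanishing with $\varepsilon,\alpha,\delta$), which is the desired contradiction, and one lets $\lambda\uparrow 1$ at the end. Without this step (or an equivalent Kruzkov-type transformation introducing a proper term) the argument does not close.
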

\par
From this result, it follows directly that we have uniqueness of a continuous solution.

\begin{corollary}\label{cor1}
Assume (\ref{H1}), (\ref{H2}), (\ref{H3}) and (A1). Let $u:\overline{\Omega}\rightarrow\R$ be a continuous, bounded viscosity solution of the problem (\ref{EK}). Then $u$ is unique 
in the class of discontinuous solutions of the corresponding Dirichlet type problem.
\end{corollary}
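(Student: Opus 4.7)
The plan is to deduce the corollary from the comparison Theorem~\ref{teo:comp} applied twice. Let $u$ be the continuous, bounded viscosity solution of (\ref{EK}), and let $v$ be any other discontinuous viscosity solution with the same boundary data $g$; the aim is to show $u \equiv v$ on $\overline{\Omega}$.

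First I would unpack the hypotheses. Continuity of $u$ gives $u^{*}=u_{*}=u$, so $u$ is simultaneously an upper-semicontinuous subsolution and a lower-semicontinuous supersolution, attaining $g$ classically on $\partial\Omega$; in particular $u$ satisfies the Dirichlet type boundary condition trivially. By the definition of discontinuous viscosity solution, $v^{*}$ is an upper-semicontinuous subsolution and $v_{*}$ a lower-semicontinuous supersolution, each satisfying the corresponding Dirichlet condition in the viscosity sense. Before invoking Theorem~\ref{teo:comp} I must check that $u$ (or $v$) is Lipschitz continuous; for $u$ this follows from the control-theoretic representation $u(x)=\inf_{a(\cdot)}\bigl\{\int_{0}^{\tau_{x}} f(y_{x}(t,a))\,dt + g(y_{x}(\tau_{x},a))\bigr\}$, combined with boundedness of $f$, Lipschitz regularity of $\sigma$ from (\ref{H3}), and a standard comparison of two trajectories of (\ref{eq:dyn}) starting from nearby points, which produces a Lipschitz constant depending on $\|f\|_{\infty}$, $L$, and the geometry of $\partial\Omega$.

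Having $u$ Lipschitz, I would then invoke Theorem~\ref{teo:comp} first with $v^{*}$ as subsolution and $u$ as supersolution: the hypotheses are satisfied because $u|_{\partial\Omega}=g$ is continuous, $v^{*}$ satisfies the Dirichlet boundary condition in the viscosity sense, and $u$ is Lipschitz. This yields $v^{*} \leq u$ on $\overline{\Omega}$. A symmetric second application, with $u$ as subsolution and $v_{*}$ as supersolution, yields $u \leq v_{*}$ on $\overline{\Omega}$; here continuity of $v_{*}|_{\partial\Omega}$ is pinned to $g$ by the upper bound just obtained together with a symmetric barrier argument bounding $v_{*}$ from below by $g$ on $\partial\Omega$. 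Chaining $u \leq v_{*} \leq v \leq v^{*} \leq u$ on $\overline{\Omega}$ forces $u \equiv v$, which also shows that every discontinuous viscosity solution must in fact coincide with the continuous one.

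The only genuine obstacle is establishing the Lipschitz regularity of the continuous solution $u$ and pinning the boundary values of $v_{*}$ so that Theorem~\ref{teo:comp} applies in the second comparison; both points are standard for eikonal equations under (H1)--(H3) and Assumption~A1, but they rely on the control-theoretic interpretation and barrier constructions rather than being pure formal consequences of the comparison principle.
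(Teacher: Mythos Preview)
Your approach---apply the comparison Theorem~\ref{teo:comp} twice, once with $v^{*}$ against $u$ and once with $u$ against $v_{*}$, then chain the inequalities---is exactly what the paper intends: it states only that the corollary ``follows directly'' from Theorem~\ref{teo:comp} and gives no further argument.

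Where you go beyond the paper is in isolating the Lipschitz hypothesis of Theorem~\ref{teo:comp}, and here your proposed justification has a real gap. First, identifying the given continuous solution $u$ with the value function of the control problem already presupposes uniqueness, so invoking the representation formula to deduce Lipschitz regularity is circular. Second, even for the value function itself, the ``standard comparison of two trajectories'' you sketch needs small-time local controllability of the dynamics (\ref{eq:dyn}); this fails precisely where $\sigma$ degenerates, which (\ref{H2})--(\ref{H3}) explicitly allow. Example~\ref{ex2} illustrates the failure: with degenerate $\sigma$ the value function is not even continuous, so boundedness of $f$ and Lipschitz $\sigma$ alone cannot produce a Lipschitz constant. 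A non-circular route exists in the \emph{non-degenerate} case---the subsolution inequality $|\sigma(x)^{T}Du|\le f^{*}(x)$ combined with invertibility of $\sigma$ gives a direct gradient bound---but in the general setting of the corollary one must lean on the finer analysis in Soravia's original work, which the present paper does not reproduce. Your concluding paragraph is therefore accurate in naming this as the genuine obstacle; what is missing is a valid argument that overcomes it under the stated hypotheses.
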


\begin{example} [Soravia \cite{S02}] \label{ex2}
This example shows that discontinuous solutions may exists without any contradiction with the previous result. This is due to the fact that  Corollary \ref{cor1} does not cover all possible situations. Let us consider the Dirichlet problem
\begin{equation}
\left\{
\begin{array}{cc}
x^2\left(u_x(x,y)\right)^2+\left(u_y(x,y)\right)^2=\left[f(x,y)\right]^2 & ]-1,1[\times]-1,1[\\
u(\pm 1,y)=u(x,\pm 1)=0 & x,y\in[-1,1]
\end{array}
\right.
\end{equation}
where $f(x, y) = 2$, for $x > 0$, and $f(x, y) = 1$ for $x \leq 0$. In this case we have that 
\begin{equation}
b_{i,j}=\left(\begin{array}{cc} x^2 & 0 \\ 0 & 1 \end{array}\right), \quad \sigma(x)=\left(\begin{array}{cc} x & 0 \\ 0 & 1 \end{array}\right),\nonumber
\end{equation}
therefore the Bellman's equation in this case is
\begin{equation}
   \max_{|a|\leq 1}\left\{-Du(x,y)\cdot a_1 (x,0)^T-Du(x,y)\cdot a_2 (0,1)^T  \right\}=f(x,y).
\end{equation}
It is easy to verify that the piecewise continuous
function,
\begin{equation}
u(x,y)=\left\{
\begin{array}{cc}
2(1-|y|) & x\geq 0, |y|>1+\ln x\\
-2ln(x) & x> 0, |y|\leq 1+\ln x\\
\frac{u(-x,y)}{2} & x< 0.
\end{array}
\right.
\end{equation}
is a viscosity solution of the problem. We know, as indirect implication of Corollary \ref{cor1} that there is no continuous solution. We note that all the class of functions with values in $x=0$ between $1-|y|$ and $2(1-|y|)$ are discontinuous viscosity solutions. However, we have that all discontinuous solutions have $u$ as upper semicontinuous envelope.
\end{example}

As shown in Example \ref{ex2}, in general we do not have existence of a continuous solution and, in general, we do no have a unique solution. But restricting ourselves to a special class of solutions, essentially the case presented in the previous example, we can preserve the accuracy of numerical approximations and we can also get an error estimate, as we will see in the sequel. \par
The presence of discontinuities is due to the degeneracy of the coefficient $\sigma$. To handle this case we need some additional hypotheses. In this case, however, the assumption will be given on the interface of degeneracy of $\sigma$.\par
From here we will restrict ourselves to the case $N=2$.\par

Let us denote by $\ell (C)$ the length of a curve $C$ and  assume the existence of a regular curve $\Sigma_0$ which splits the domain $\Omega$ in two non degenerating parts. Calling $\eta(x)=(\eta^1(x), \eta^2(x))$ the unit normal to $\Sigma_0$ on the point $x\in\Sigma_0$, we state:
 
{\sc Assumption A2.} There exists a curve $\Sigma_0\subset \Omega$ such that, for the points $x\in\Sigma_0$ we have
$$ \eta^1(x)\sigma_1(x)+\eta^2(x)\sigma_2(x)=0; $$
moreover
\begin{enumerate}
\item $ p^1(x)\sigma_1(x)+p^2(x)\sigma_2(x) \neq 0$ for every $(p^1,p^2)\in B(0,1)$ and $x\notin {\Sigma_0}$; 
\item $\ell({\Sigma_0})<+\infty$.\par
\item Let $\Omega=\Omega_1 \cup \Omega_2\cup {\Sigma_0} $, where, in each subset $\Omega_j$ there is not degeneracy of $\sigma$, we have $\Omega_j\cap\partial \Omega \neq \emptyset$ for $j=\{1,2\}$.
\end{enumerate}
\par
We conclude this section with the following result, which  can be derived by adapting the classical proof by Ishii \cite{I87}:
\begin{theorem} \label{lipreg}
Let $\Omega$ be an open domain with Lipschitz boundary. Assume (\ref{H1}), (\ref{H2}), (\ref{H3}), \ref{nocusp} and assumptions $A2$. Let $u:\overline{\Omega}\rightarrow\R$ be a  bounded viscosity solution of the problem (\ref{EK}). It is Lipschitz continuous in every set $\Omega_1$ and $\Omega_2$.

\end{theorem}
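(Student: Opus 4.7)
My plan is to adapt Ishii's classical regularity argument \cite{I87} by localizing to the non-degenerate subdomains $\Omega_j$ singled out by Assumption $A2$, and extracting a Lipschitz bound from the subsolution inequality satisfied by the upper-semicontinuous envelope $u^*$.

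First I would establish uniform coercivity on compact subsets. Fix $j\in\{1,2\}$ and a compact $K\subset\Omega_j$. By item (1) of Assumption $A2$ the expression $p^1\sigma_1(x)+p^2\sigma_2(x)$ does not vanish for any unit vector $p$ while $x\notin\Sigma_0$; together with (\ref{H3}) and the compactness of $K$, this produces $\lambda_K>0$ such that
\begin{equation*}
b_{ij}(x)p_ip_j \;=\; |p\cdot\sigma(x)|^2 \;\geq\; \lambda_K|p|^2, \qquad \forall\,x\in K,\; p\in\R^2 .
\end{equation*}
Since $f$ is assumed locally bounded, set $M_K:=\sup_K f<+\infty$.

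Feeding this coercivity into the subsolution inequality for $u^*$ produces a Lipschitz estimate for $u^*$: for any $\phi\in C^1$ and any $x\in K$ at which $u^*-\phi$ attains a local maximum,
\begin{equation*}
\lambda_K|D\phi(x)|^2 \;\leq\; b_{ij}(x)\phi_{x_i}(x)\phi_{x_j}(x) \;\leq\; [f^*(x)]^2 \;\leq\; M_K^2 .
\end{equation*}
Hence $u^*$ is a viscosity subsolution of the simple eikonal inequality $|Du|\leq L_K:=M_K/\sqrt{\lambda_K}$ in the interior of $K$; by the standard characterization recalled in Chapter~II of \cite{BCD97}, this is equivalent to $u^*$ being $L_K$-Lipschitz on $K$. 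To upgrade the regularity of $u^*$ into a regularity of $u$ itself, I would argue that $u=u^*=u_*$ throughout $\Omega_j$ by invoking Theorem~\ref{teo:comp} with $u^*$ as subsolution and $u_*$ as supersolution: the Lipschitz regularity of $u^*$ just obtained supplies the missing Lipschitz hypothesis, the Dirichlet data on $\partial\Omega\cap\overline{\Omega}_j$ are inherited from (\ref{EK}), and on $\Sigma_0$ one uses that $u^*,u_*$ remain global sub- and super-solution. The comparison then gives $u^*\leq u_*$; combined with $u_*\leq u\leq u^*$ this forces equality, transferring the $L_K$-Lipschitz bound from $u^*$ to $u$ on every compact $K\subset\Omega_j$.

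The main obstacle lies in this last identification step: $\Sigma_0$ is an interior degeneration interface on which no classical Dirichlet condition is imposed, so a ``localized'' comparison inside $\Omega_j$ alone is not entirely routine. A cleaner fall-back would be to bypass it via the control representation: the coercivity obtained in the first step implies full controllability of (\ref{eq:dyn}) inside $\Omega_j$, so any two points $x,z\in K$ can be connected by an admissible trajectory in time $\le C_K|x-z|$; concatenating this arc with a near-optimal control issued from $z$ and using $f\leq M_K$ along the short connecting piece yields directly $u(x)\leq u(z)+C_K M_K|x-z|+\varepsilon$ for every $\varepsilon>0$, from which symmetry in $x,z$ and $\varepsilon\to 0$ deliver the desired Lipschitz estimate for $u$ itself.
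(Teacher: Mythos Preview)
Your route is genuinely different from the paper's. The paper works by \emph{construction}: it restricts to $\overline{\Omega}_j^\delta:=\overline{\Omega_j\setminus\Sigma_\delta}$, extends the Dirichlet data to the inner boundary $\partial\Sigma_\delta$ via the optical-length functional $L(x,y)$ built from $f^*$, and then quotes Ishii~\cite{I87} to produce a solution $u_j^\delta\in C^{0,1}(\overline{\Omega}_j^\delta)$; gluing the two pieces and letting $\delta\to 0$ finishes. You instead \emph{extract} the Lipschitz bound directly from the subsolution inequality for $u^*$ via uniform ellipticity of $(b_{ij})$ on compacta of $\Omega_j$. Your first two steps are correct and more elementary: Assumption~$A2$(1) plus compactness does give $b_{ij}(x)p_ip_j\geq\lambda_K|p|^2$ on every compact $K\subset\Omega_j$, and the standard argument (Chapter~II of~\cite{BCD97}) then yields that $u^*$ is locally Lipschitz on $\Omega_j$.

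The gap is in the identification of $u$ with $u^*$. Your call to Theorem~\ref{teo:comp} does not go through as written: that theorem requires one of the two functions to be Lipschitz on~$\overline{\Omega}$, whereas you have only established Lipschitz regularity of $u^*$ on each $\Omega_j$ separately, with no control across $\Sigma_0$. Localizing the comparison to $\Omega_j$ runs into exactly the obstacle you flag, namely the absence of any Dirichlet condition along the interior interface $\Sigma_0$. Your control-theoretic fallback does not close this gap either: the inequality $u(x)\leq u(z)+C_KM_K|x-z|+\varepsilon$ obtained by concatenating a short arc with a near-optimal control from $z$ is a dynamic-programming inequality for the \emph{value function}, and using it for $u$ presupposes that $u$ coincides with that value function --- which is precisely the identification you are trying to establish, and which may fail in the presence of genuinely discontinuous solutions (cf.\ Example~\ref{ex2}). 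It is worth noting, in fairness, that the paper's own proof has a parallel weakness: it constructs a Lipschitz solution $u^\delta$ on $\overline{\Omega}_1^\delta\cup\overline{\Omega}_2^\delta$ but never explicitly argues that the \emph{given} viscosity solution $u$ agrees with it on~$\Omega_j$.
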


\begin{proof}
Take a parameter $\delta>0$, and define the set 
\begin{equation}
\Sigma_{\delta}:=\left\{x\in\Omega | B(x,\delta)\cap \Sigma_0 \neq \emptyset \right\};
\end{equation}
we want to study the regularity of the viscosity solution in the set $\overline{\Omega_1\setminus \Sigma_{\delta}}=\overline{\Omega}_1^{\delta}$. \par
In order to describe our boundary assumptions on $\overline{\Omega}_1^{\delta}\cap \Sigma_{\delta}$ let us define $L:\overline{\Omega}_1^{\delta}\times \overline{\Omega}_1^\delta\rightarrow \R$ by
\begin{eqnarray}
L(x,y):=\inf\big\{\int_0^1 N(f^*(\gamma(t)),\gamma'(t))dt | \gamma\in W^{1,\infty}((0,1),\overline{\Omega}_1^{\delta}) \\
\hbox{ with } \gamma(0)=x, \hbox{ } \gamma(1)=y \big\}\nonumber
\end{eqnarray}
where 
\begin{equation}
N(r,\zeta):=\sup \left\{-(\zeta,p)| \max_{|a|\leq 1}\left\{-p\cdot \sum_{k=1}^M a^k \sigma_k (x)=r\right\} \right\}.
\end{equation}
Then we extend the boundary condition to $\overline{\Omega}_1^\delta \cap \Sigma_{\delta}$ in the following way:
\begin{equation}\label{exd}
g(x)=\inf_{y\in \partial \Omega_1^{\delta} \setminus \partial \Sigma_{\delta}} \left\{ g(y)+L(x,y)\right\} \quad x\in \overline{\Omega}_1^{\delta} \cap \Sigma_{\delta}
\end{equation}
We can claim now, that there exists a viscosity solution $u_1^\delta\in C^{0,1}(\overline{\Omega}_1^\delta)$ of \eqref{EK} with the Dirichlet conditions introduced above. This is proved in  Ishii \cite{I87}.\par
We do the same on the set $\Omega_2$, getting the function $u_2^{\delta}\in C^{0,1}(\overline{\Omega}_2^\delta)$. Now the class of functions 
\begin{equation}
u^\delta (x):=
\left\{
\begin{array}{ll}
u_1^\delta & x\in \overline{\Omega}_1^\delta\\
u_2^\delta & x\in \overline{\Omega}_2^\delta
\end{array}
\right.
\end{equation}
in a viscosity solution of \eqref{EK} in $ \overline{\Omega}_1^\delta\cup  \overline{\Omega}_2^\delta$. For the arbitrariness of $\delta$ and defining $u$ on the discontinuity as said previously we get the thesis.
\end{proof}

Which value the solution can assume in $\Sigma_0$? As shown in Example 2 and in accordance with the definition of discontinuous viscosity solutions, we can choose for $x\in\Sigma_0$ every value between $u_* $ and $u^*$. \par

We can observe that in this class we can also include a easier case. If instead the function $\sigma$ we consider a coefficient $c(x):\Omega\rightarrow \R$ where $c(x)\geq 0$ for all $x\in \Omega$ but which can vanish in some points. In particular, in this case we will define $\Sigma_0:=\{x\in\Omega | c(x)=0 \}$ and the previous hypothesis on the nature of $\Sigma_0$ reduces to 
$$ \ell(\Sigma_0)<+\infty \hbox{ and }   \Omega_j\cap\partial \Omega \neq \emptyset \hbox{  for  } j=\{1,2\}.$$

\section{The semi-Lagrangian approximation scheme and its properties}\label{sec:semiapprox}
We construct a semi-Lagrangian approximation scheme for the equation (\ref{EK}) following the approach \cite{FF02} .\par
Introducing the  Kruzkov's change of variable,  $v(x)=1-e^{-u(x)}$ and  using (\ref{BL}) and (\ref{max}) the problem (\ref{EK}) becomes
\begin{equation}
\left\{
\begin{array}{ll}
|Dv(x)\cdot \sigma(x)|=f(x)(1-v(x)) & x\in \Omega\\
v(x)=1-e^{-g(x)}& x\in \partial \Omega
\end{array}
\right.
\end{equation} 
to come back to the original unknown $u$  we can use the inverse transform, i.e. $u(x)=\ln(1-v(x))$.\par
Let us to observe that since  $u(x)\geq 0$, we have $0\leq v(x)<1$.  
We can write the previous equation in the equivalent way
\begin{equation}\label{EIK2}
\left\{
\begin{array}{ll}
v(x)+\frac{1}{f(x)}|D v(x)\cdot \sigma(x)|=1 & x\in \Omega\\
v(x)=1-e^{-g(x)}& x\in \partial \Omega
\end{array}
\right.
\end{equation} 
We want to build a discrete approximation of (\ref{EIK2}). We pass to the Bellman's equation in the following form
\begin{equation}\label{2:pas}
\sup_{a\in B(0,1)} \left\{\frac{\sum_k a^k \sigma_k(x)}{f(x)}\cdot Dv(x)\right\}=1-v(x).
\end{equation}
We observe that, in this formulation, it exists a clear  interpretation of this equation as the value function of an optimization problem of constant running cost and discount factor  equal to one and the modulus of the velocity in the direction $a$ of the dynamics equal to $\frac{a\cdot \sigma(x)}{f(x)}$.\par
We discretize the left-hand side term of (\ref{2:pas}) as a directional derivative and we arrive to the following discrete problem:
\begin{equation}\label{SDE}
\left\{
\begin{array}{ll}
v_h(x)=\frac{1}{1+h}\inf\limits_{a\in B(0,1)}\left\{v_h\left(x-\frac{h}{f(x)}\sum_k a^k \sigma_k(x)\right)\right\}+\frac{h}{1+h} & x\in \Omega\\
v_h(x)=1-e^{-g(x)}& x\in \partial \Omega
\end{array}
\right.
\end{equation} 
where $h$ is a positive real number and we will assume (to simplify the presentation) that ${x-\frac{h}{f(x)}\sum_k a^k \sigma_k(x)\in\overline{\Omega}}$ for every $a\in B(0,1)$. \par
We have to remark that for $x\in\overline{\Omega}$ and a direction $d\in \partial B(0,1)$, we always can find an $a\in B(0,1)$ such that $\frac{a}{|a|}=d$ and $x-\frac{h}{f(x)}\sum_k a^k \sigma_k(x)\in\overline{\Omega}$, (see Figure \ref{2:dom}) because $\Omega$ is an open set and we can chose the variable $a$ null to remain on $x$. \par

\begin{figure}[htb]
\begin{center}
\includegraphics[height=6cm]{./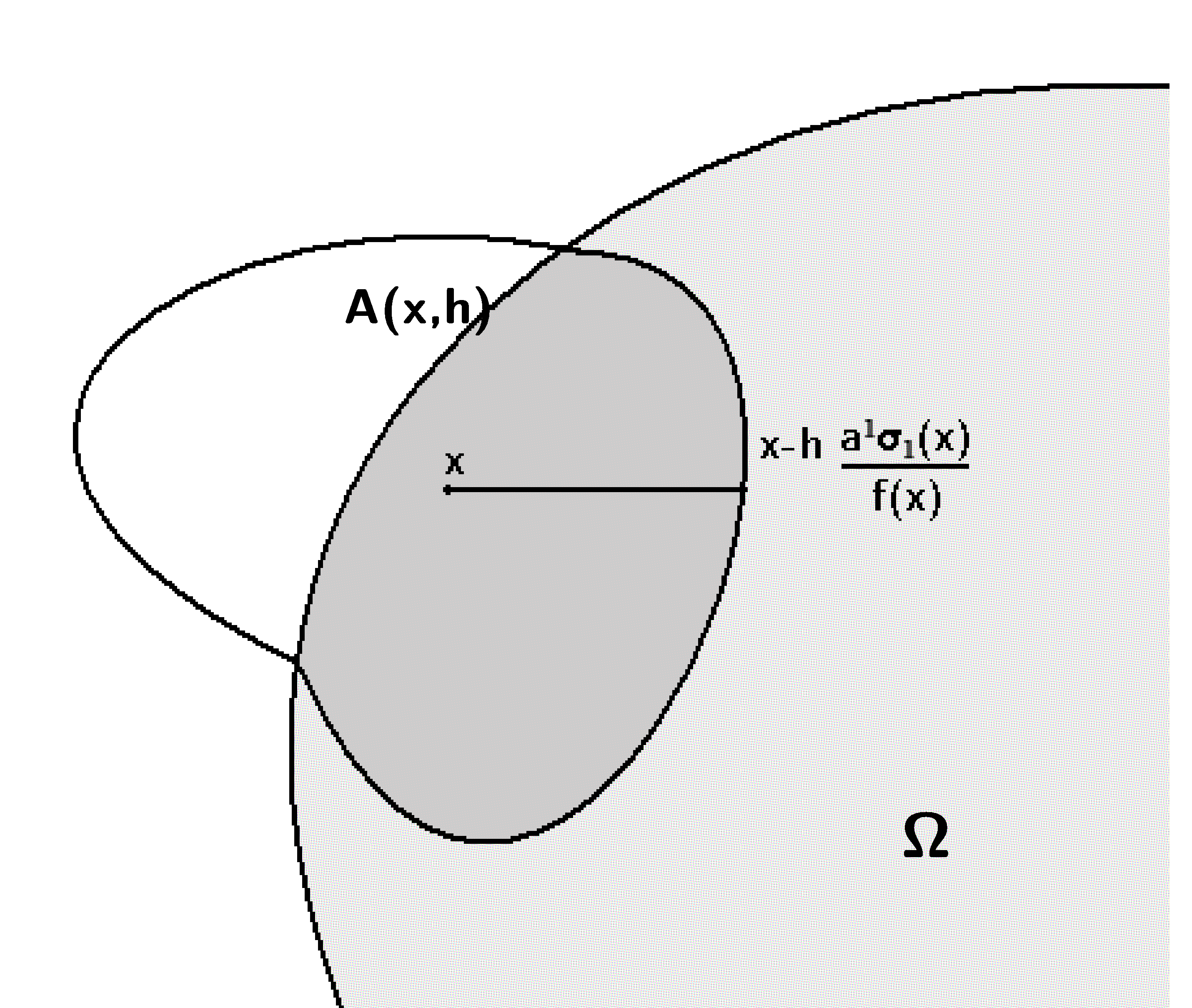}  \\
\caption{The set $A(x,h):=\left\{x-\frac{h}{f(x)}\sum_k a^k \sigma_k(x); a\in B(0,1)\right\}$ in dimension 2. In dark grey $\Omega\cap A(x,h)$} \label{2:dom}
\end{center}
\end{figure}

Let introduce a space discretization of (\ref{SDE}) yielding a fully discrete scheme. We construct a regular triangulation of $\Omega$ made by a family of simplices $S_j$, such that $\overline{\Omega}=\cup_j S_j$, denoting $x_m$, $m=1, ... , L$, the nodes of the triangulation, by 
\begin{equation} \Delta x :=\max_j {\it diam}(S_j)
\end{equation}
the size of the mesh ($diam(B)$ denotes the diameter of the set $B$) and by $G$ the set of the knots of the grid.

We look for a solution of
\begin{equation}\label{DE}
\left\{
\begin{array}{l}
W (x_m)=\frac{1}{1+ {h}}\min\limits_{a \in B(0,1)} I[W](x_m - \frac{h}{f(x_m)}\sum\limits_k a^k \sigma_k(x_m))+\frac{h}{1+h}\quad x_m\in G\\
W(x_m)=1-e^{-g(x_m)}\quad \phantom{fffffffffffffffhhhhhhhhhff}x_m\in G\cap \partial \Omega
\end{array}
\right.
\end{equation} 
where $I[W](x)$ is a linear interpolation of $W$ on the point $x$, in the space of piecewise linear functions on $\overline{\Omega}$
\begin{equation}
\mathcal{W}^{\Delta x}:=\left\{w:\overline{\Omega}\rightarrow \R|w\in C(\Omega) \hbox{ and }Dw(x)=c_j \hbox{ for any } x\in S_j\right\}.\nonumber
\end{equation}

\begin{theorem}
Let $x_m - \frac{h}{f(x_m)}\sum_k a^k \sigma_k(x_m)\in \overline{\Omega}$, for every $x_m \in G $, for any $a\in B(0,1)$, so there exists a unique solution $W$ of (\ref{DE}) in $\mathcal{W}^{\Delta x}$
\end{theorem}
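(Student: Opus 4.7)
The plan is to recast the scheme \eqref{DE} as a fixed point equation $W = T[W]$ on the finite-dimensional space $\mathcal{W}^{\Delta x}$ and apply the Banach contraction principle. Since an element of $\mathcal{W}^{\Delta x}$ is uniquely determined by its nodal values on $G$, we can equivalently work on $\R^L$ equipped with the sup-norm $\|\cdot\|_\infty$, which is complete.

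First I would define the operator $T:\mathcal{W}^{\Delta x}\to\mathcal{W}^{\Delta x}$ whose nodal values are
\[
T[W](x_m)=\begin{cases}\dfrac{1}{1+h}\min\limits_{a\in B(0,1)} I[W]\!\left(x_m-\dfrac{h}{f(x_m)}\sum_k a^k\sigma_k(x_m)\right)+\dfrac{h}{1+h}, & x_m\in G\setminus\partial\Omega,\\[1.2ex] 1-e^{-g(x_m)}, & x_m\in G\cap\partial\Omega,\end{cases}
\]
and then extended to all of $\overline{\Omega}$ by linear interpolation on the simplices $S_j$. The standing assumption $x_m-\frac{h}{f(x_m)}\sum_k a^k\sigma_k(x_m)\in\overline{\Omega}$ ensures that $I[W]$ is evaluated only at admissible points, so $T$ is well-defined, and by construction $T[W]\in\mathcal{W}^{\Delta x}$.

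The central step is to prove that $T$ is a contraction in the sup-norm with constant $\frac{1}{1+h}<1$. For $W_1,W_2\in\mathcal{W}^{\Delta x}$ and an internal node $x_m$, using the elementary inequality $|\min_a F_1(a)-\min_a F_2(a)|\le\sup_a|F_1(a)-F_2(a)|$, we get
\[
|T[W_1](x_m)-T[W_2](x_m)|\le\frac{1}{1+h}\sup_{a\in B(0,1)}\bigl|I[W_1](y_a)-I[W_2](y_a)\bigr|,
\]
with $y_a=x_m-\frac{h}{f(x_m)}\sum_k a^k\sigma_k(x_m)$. The key observation is that the linear interpolation operator is non-expansive in $\|\cdot\|_\infty$: since $I[W](y)$ is a convex combination of nodal values of $W$ with the same barycentric weights for both $W_1$ and $W_2$, one has $|I[W_1](y)-I[W_2](y)|\le\|W_1-W_2\|_\infty$ for every $y\in\overline\Omega$. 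On boundary nodes the two images coincide, hence the difference is zero. Taking the maximum over all nodes yields $\|T[W_1]-T[W_2]\|_\infty\le\frac{1}{1+h}\|W_1-W_2\|_\infty$.

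Since $\frac{1}{1+h}<1$ and $\mathcal{W}^{\Delta x}$ is a finite-dimensional (hence complete) normed space when endowed with the sup-norm of nodal values, Banach's fixed point theorem provides existence and uniqueness of $W\in\mathcal{W}^{\Delta x}$ with $T[W]=W$, which is precisely the required solution of \eqref{DE}. The only point that requires some care is the non-expansivity of $I$, which relies on the fact that the barycentric interpolation coefficients are non-negative and sum to one; everything else is a routine verification.
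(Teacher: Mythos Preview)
Your proposal is correct and follows essentially the same strategy as the paper: recast \eqref{DE} as a fixed point problem on $\R^L$ with the sup-norm, exploit that linear interpolation writes $I[W](y)$ as a convex combination of nodal values (barycentric coefficients are nonnegative and sum to one), and conclude that $T$ is a contraction with ratio $\frac{1}{1+h}$. The only cosmetic difference is that the paper bounds $[T(W)-T(V)]_m$ by fixing the optimal control $\bar a$ for $V$ and then switching the roles of $W$ and $V$, whereas you invoke the inequality $|\min_a F_1-\min_a F_2|\le\sup_a|F_1-F_2|$ directly; the two arguments are equivalent.
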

\begin{proof}
By our assumption, starting from any $x_m \in G$ we will reach points which still belong to $\Omega$. So, for every $w\in\mathcal{W}^{\Delta x}$ we have 
\begin{equation}
  w\left(x_m - \frac{h}{f(x_m)}\sum_k a^k \sigma_k(x_m)\right)=\sum_{j=1}^{L}\lambda_{m j}(a) w(x_j) \nonumber
\end{equation}
where $\lambda_{m j}(a)$ are the coefficients of the convex combination representing the point $x_m - \frac{h}{f(x_m)}\sum_k a^k \sigma_k(x_m)$, and $L$ the number of nodes of $G$, i.e.
\begin{equation} \label{ptcrc}
x_m - \frac{h}{f(x_m)}\sum_k a^k \sigma_k(x_m) =\sum_{j=1}^L \lambda_{m j}(a) x_j
\end{equation}
now we observe
\begin{equation} \label{ptcrc2}
0\leq \lambda_{m j}(a)\leq 1 \quad \hbox{ and } \sum_{j=1}^L \lambda_{m j}(a)=1 \quad \hbox{ for any } a\in B(0,1)
\end{equation}
Then (\ref{DE}) is equivalent to the following fixed point problem in finite dimension
\begin{equation}
W=T(W)  \nonumber
\end{equation}
where the map $T:\R^L\rightarrow \R^L$ is defined componentwise as
\begin{equation}
(T(W))_m:=\left[\frac{1}{1+h}\min_{a\in B(0,1)}\Lambda(a)W+\frac{h}{1+h}\right]_m\quad m\in 1,...,L
\end{equation}
$W_m\equiv W(x_m)$ and $\Lambda(a)$ is the $L\times L$ matrix of the coefficients $\lambda_{m j}$ satisfying (\ref{ptcrc}), (\ref{ptcrc2}) for $m,j\in1,...,L$.\par
$T$ is a contraction mapping. In fact, let $\overline{a}$ be a control giving the minimum in $T(V)_m$, we have 
\begin{multline}
\left[T(W)-T(V)\right]_m\leq \frac{1}{1+h}\left[\Lambda(\overline{a})(W-V)\right]_m\\
\leq\frac{1}{1+h}\max_{m,j}|\lambda_{m j}(a)| ||W-V||_\infty\leq\frac{1}{1+h} ||W-V||_\infty
\end{multline}
Switching the role of $W$ and $V$ we can conclude that 
\begin{equation}
\left\|T(W)-T(V)\right\|_{\infty} \leq \frac{1}{1+h}\left\|W-V\right\|_\infty
\end{equation}
\end{proof}

\subsection{Properties of the scheme}
The solution of (\ref{DE}) has the following crucial proprieties:\par
{\em Consistency}\\
From (\ref{DE}) we obtain
\begin{equation}
  W(x_m)-\frac{1}{h}\min_{a\in B(0,1)}\left\{-W(x_m)+I[W](x_m - \frac{h}{f(x_m)}\sum_k a^k \sigma_k(x_m))\right\}=1
\end{equation}
We can see the term on the minimum as a first order approximation of the directional derivative
\begin{equation}
   -\min_{a\in B(0,1)}\left\{DW\cdot \sum_k a^k \sigma_k(x)\right\}+o(h)=1-W(x_m)
\end{equation}
using $\max(\cdot)=-\min(-\cdot)$ we find the consistency, that is of order $o(h+\Delta x)$.\par
{\em Convergence and monotonicity.} 
Since $T$ is a contraction mapping in $\R^N$, the sequence 
\begin{equation}
W^n=T(W^{n-1}),
\end{equation}
will converge to $W$, for any $Z\in \R^N$. Moreover, the following estimate holds true:
\begin{equation}
 ||W^n-W||_\infty\leq \left(\frac{1}{1+h }\right)^n ||W_0-W||_\infty.
\end{equation}

\section{An a-priori estimate in $L^1(\Omega)$} \label{sec:conv}
In this section we present our main  result. As stated in previous section the relevance of this result is due to its applicability in the case of discontinuous value functions. Using a $L^1(\Omega)$ norm we can extend the convergence result also to this class of solutions.\par

\begin{theorem}\label{rate1}
Let assume the hypotheses (\ref{H1}), (\ref{H2}), (\ref{H3}), (\ref{nocusp}) and assumptions on the set $\Sigma_0$. Moreover, let $\frac{h}{\Delta x}<\frac{\rho}{M_\sigma}$.\par 
We have that 
\begin{equation}
 ||v(x)-W(x)||_{L^1(\Omega)}\leq C \sqrt{h}+C'\Delta x \quad \hbox{ for all } h>0
\end{equation}
for some positive constant $C,C'$ independent from $h$ and $\Delta x$.
\end{theorem}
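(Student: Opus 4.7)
The plan is to split $\Omega$ into a ``smooth'' part, where a standard $L^\infty$ error estimate for the semi-Lagrangian scheme applies, and a narrow neighborhood of the interface $\Sigma_0$, whose Lebesgue measure is controlled via assumption $A2$. Concretely, for $\delta>0$ set $\Sigma_\delta := \{x\in\Omega:\mathrm{dist}(x,\Sigma_0)<\delta\}$ and $\Omega_j^\delta := \Omega_j\setminus\Sigma_\delta$, so that $\Omega = \Omega_1^\delta \cup \Omega_2^\delta \cup \Sigma_\delta$. Since $\ell(\Sigma_0)<+\infty$, one has $|\Sigma_\delta|\le C_0\,\delta$ for $\delta$ small.

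On each subdomain $\Omega_j^\delta$ the viscosity solution $v$ is Lipschitz continuous (Theorem \ref{lipreg}) with a Lipschitz constant independent of $\delta$, and the scheme (\ref{DE}) is consistent and monotone by Section \ref{sec:semiapprox}. The CFL-type condition $h/\Delta x<\rho/M_\sigma$ ensures that the one-step displacement $(h/f(x))\sum_k a^k \sigma_k(x)$ has length at most $hM_\sigma/\rho$, so as soon as $\delta\ge C_1 h$ the discrete characteristics starting in $\Omega_j^\delta$ remain inside $\Omega_j$; at the artificial boundary $\partial\Omega_j^\delta\cap\Sigma_\delta$ the trace of $v$ is recovered by the value-function extension (\ref{exd}). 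A Crandall--Lions doubling-of-variables argument, adapted to the semi-Lagrangian scheme along the lines available for continuous Hamiltonians, then produces
\[
\|v-W\|_{L^\infty(\Omega_j^\delta)} \le C\sqrt{h} + C'\Delta x.
\]
On the remaining region $\Sigma_\delta$ I would invoke the pointwise bound $0\le v,W<1$, which follows from the Kruzkov transform $v=1-e^{-u}$ and from the stability of the fixed-point iteration defining $W$, to conclude $|v-W|\le 1$. Summing the three contributions and choosing $\delta=\sqrt{h}$, which satisfies $\delta\ge C_1 h$ for $h$ sufficiently small, gives
\[
\|v-W\|_{L^1(\Omega)} \le |\Omega|\bigl(C\sqrt{h}+C'\Delta x\bigr) + C_0\sqrt{h},
\]
and absorbing constants yields the announced bound.

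The main obstacle is the $L^\infty$ estimate on each $\Omega_j^\delta$, because the piece of boundary $\partial\Omega_j^\delta\cap\Sigma_\delta$ is artificial and carries no Dirichlet data in the original problem. One has to exploit the first item of assumption $A2$, which says that admissible trajectories of (\ref{eq:dyn}) cannot cross $\Sigma_0$, together with the CFL condition (which transfers this property to the scheme), in order to show that the restrictions of $v$ and of $W$ to $\Omega_j^\delta$ solve essentially independent boundary-value problems whose boundary data can be compared through the extension formula (\ref{exd}) and the comparison principle of Theorem \ref{teo:comp}. Once this reduction is in place, the doubling-of-variables analysis proceeds as in the standard SL error estimate for Lipschitz viscosity solutions.
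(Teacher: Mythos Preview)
Your proposal is correct and follows essentially the same strategy as the paper: split $\Omega$ into a thin strip around $\Sigma_0$ where the trivial bound $|v-W|\le 1$ and $|\Sigma_\delta|\le C\delta$ suffice, and the complementary regions $\Omega_j^\delta$ where a doubling-of-variables argument (with the artificial boundary handled via the extension \eqref{exd}) yields an $L^\infty$ estimate. The only notable difference is bookkeeping: the paper takes the strip width equal to $\Delta x$ (so that, under the CFL condition $h/\Delta x<\rho/M_\sigma$, the one-step displacement is strictly less than $\Delta x$ and the $C'\Delta x$ term arises solely from the strip while the good region contributes only $C\sqrt{h}$), whereas you take $\delta=\sqrt{h}$ and place the $\Delta x$ contribution inside the interior $L^\infty$ estimate; both choices lead to the same final bound.
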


\begin{proof}
We start introducing the set $\Sigma_{\Delta x}$ defined as it follows
$${\Sigma_{\Delta x}}:=\left\{x\in\Omega| B\left(x,\Delta x\right)\cap {\Sigma_0} \neq \emptyset\right\}.$$

We observe that
\begin{multline}\label{pass1}
   ||v(x)-W(x)||_{L^1(\Omega)}\leq \int_{\Omega\setminus {\Sigma_{\Delta x}}} |v(x)-W(x)|dx +\int_{{\Sigma_{\Delta x}}}|v(x)-W(x)|dx\\
   \leq \sum_j\int_{\Omega_j} |v(x)-W(x)|dx +\int_{{\Sigma_{\Delta x}}}|v(x)-W(x)|dx
\end{multline}
where $\Omega:=\cap_j \Omega_j$ is the partition of $\Omega$ generated from ${\Sigma_0}$ as stated in the definition of the set $\Sigma_0$.\par

From the Kruzkov's transform we know  that  $|v(x)-W(x)|\leq 1$ for all $x\in\Omega$ and adding the assumptions on the set $\Sigma_0$ we get, for a fixed $ C'>0$,

\begin{equation}
   \int_{{\Sigma_{\Delta x}} }|v(x)-W(x)|dx \leq \int_{{\Sigma_{\Delta x}}}dx \leq \ell({\Sigma_0}) \Delta x\leq C' \Delta x.
\end{equation}
To prove the statement, we need an  estimate for the term $\int_{\Omega_j}|v(x)-W(x)|dx$ for every choice of $j$. With this aim, we remind that, for Theorem \ref{lipreg}, both $v(x)$ and $W(x)$ are Lipschitz continuous, so we can use a modification of the classical argument based on duplication of variables. Something similar can be found on \cite{S85,DE04} and \cite{S06}.\par
We are focusing on the problem (\ref{DE}) restricted on the region $\widehat{\Omega}_j:=\Omega_j \setminus \Sigma_{\Delta x}$ with some compatible Dirichlet conditions on $\Omega_j\cap\partial \Omega$. We do not have any Dirichlet conditions on $\partial\widehat{\Omega}_j\cap \partial \Sigma_{\Delta x}$, so we extend the boundary conditions as in \eqref{exd}. Inside the region $\widehat{\Omega}_j$ the solution $v(x)$ is Lipschitz continuous by Theorem \ref{lipreg}.\par

Let us choose a point $\widehat{x}\in G\cup \Omega_j=:G_j$ such that
\begin{equation}
   |v(\widehat{x})-W(\widehat{x})|=\max_{x \in G_{j}}  |v(x)-W(x)|
\end{equation}
and assume that $v(\widehat{x})\geq W(\widehat{x})$. The opposite case can be treated similarly. If $dist(\widehat{x},\partial\widehat{\Omega}_j)\leq\sqrt{h}$, implies, from the Dirichlet conditions and the Lipschitz continuity of $v$ and $W$ that 
\begin{equation}
   \max_{x \in G_j} |v(x)-W(x)|=v(\widehat{x})-W(\widehat{x})\leq C\sqrt{h}.
\end{equation}

Now, suppose that $dist(\widehat{x},\partial \widehat{\Omega}_j)>\sqrt{h}$ and define the auxiliary function 
\begin{equation}
\psi(x,y):=v(x)-W(y)-L_1\frac{|x-y-\sqrt{h} \eta|^2}{2 \sqrt{h}}-L_2\sqrt{h} {|y-\widehat{x}|^2}, \quad \hbox{ for }(x,y)\in\Omega_j\times G_j.
\end{equation}
Where $\eta$ is the inward normal to $\Omega_j$ like stated in previous assumptions.\par
It is not hard to check that the boundedness of $v$, $W$ and the continuity of $\psi$, imply the existence of some $(\overline{x},\overline{y})$ (depending on $h$) such that
\begin{equation}
  \psi(\overline{x},\overline{y})\geq \psi(x,y)\quad \hbox{ for all } (x,y)\in\widehat{\Omega}_j\times G_j.
\end{equation}
Since $dist(\widehat{x},\partial \widehat{\Omega}_j)>\sqrt{h}$, we have that $\widehat{x}+\sqrt{h}\eta\in\widehat{\Omega}_j$ and therefore
\begin{equation}
  \psi(\overline{x},\overline{y})\geq \psi(\widehat{x}+\sqrt{h}\eta,\widehat{x}),
\end{equation}
or equivalently
\begin{equation}\label{eqw}
v(\overline{x})-W(\overline{y})-\frac{L_1}{\sqrt{h}}|\overline{x}-\overline{y}-\sqrt{h}\eta|^2-L_2 \sqrt{h}|\overline{y}-\widehat{x}|^2\geq v(\widehat{x}-\sqrt{h}\eta)-W(\widehat{x}).
\end{equation}
\eqref{eqw} implies
\begin{multline}
\frac{L_1}{\sqrt{h}}|\overline{x}-\overline{y}-\sqrt{h}\eta|^2+L_2\sqrt{h}|\overline{y}-\widehat{x}|^2\leq v(\overline{x})-v(\widehat{x}-\sqrt{h}\eta)+W(\widehat{x})-W(\overline{y})\\
\leq v(\overline{x})-v(\overline{y})+\left[ \left( v(\overline{y}-W(\overline{y})\right)-\left( v(\widehat{x}-W(\widehat{x})\right) \right] +v(\widehat{x})-v(\widehat{x}-\sqrt{h}\eta)\\
\leq L_v |\overline{x}-\overline{y}|+\sqrt{h}\,L_v  
\leq  L_v |\overline{x}-\overline{y}-\sqrt{h}\eta|+2\sqrt{h}L_v  \\
\leq \frac{L_1}{2 \sqrt{h}}   |\overline{x}-\overline{y}-\sqrt{h}\eta|^2+\frac{\sqrt{h}}{2 L_1} \,L_v^2+2 \sqrt{h} \,L_v
\end{multline}
where $L_v$ is the Lipschitz constant of $v$, and therefore we can conclude
\begin{equation}\label{est1}
\frac{1}{h}|\overline{x}-\overline{y}-\sqrt{h}\eta|^2\leq \frac{1}{{L_1}^2}L_v^2+\frac{4}{L_1}L_v^2<\left(\frac{\epsilon}{2+\epsilon}\right)^2
\end{equation}

\begin{equation}\label{est2}
|\overline{y}-\widehat{x}|^2\leq \frac{1}{2 L_1 L_2} L_v^2+\frac{2}{L_2} L_v<\epsilon^2
\end{equation}
for a $\epsilon>0$, provided $L_1, L_2$ are sufficiently large.

Let us consider now the case $(\overline{x},\widehat{x})\in \widehat{\Omega}_j\times G_j$, so there are not on the boundary. \par
By (\ref{SDE}) we have, for a $x\in G_j$
\begin{equation}\label{2:pp}
  W\left(x-h\frac{\sum \tilde{a}^k \sigma_k(x)}{f(x)}\right)=W(x)+h  W(x)-h
\end{equation}
for some $\tilde{a}=\tilde{a}(x)$. This equation is verified a.e. and the point $x-h\frac{\sum \tilde{a}^k \sigma_k(x)}{f(x)}\in \Omega_j$ from the definition of the admissible choice of $\overline{a}$ and the hypothesis on the discretization steps. Since the map
\begin{equation}
x\mapsto v(x)-\left[W(\overline{y})+L_1\frac{|x-\overline{y}-\sqrt{h}\eta|^2}{2 \sqrt{h}}+L_2 \sqrt{h}{|\overline{y}-\widehat{x}|^2}\right]
\end{equation}
has a maximum at $\overline{x}$, by (\ref{EIK2}) we obtain
\begin{equation}\label{cross}
-L_1\frac{|(\overline{x}-\overline{y}-\sqrt{h}\eta)\cdot\sigma(\overline{x})|}{\sqrt{h}}\leq f_*(\overline{x})-f_*(\overline{x})v(\overline{x})
\end{equation}
and then
\begin{equation}\label{2:passs}
v(\overline{x})\leq 1+\frac{L_1}{f_*(\overline{x})}\frac{|(\overline{x}-\overline{y}-\sqrt{h}\eta)\cdot \sigma(\overline{x})|}{\sqrt{h}}\leq 1+\frac{L_1}{\sqrt{h}}(\overline{x}-\overline{y}-\sqrt{h}\eta)\cdot\frac{\sum \overline{a}^k \sigma_k(\overline{x})}{f_*(\overline{x})};
\end{equation}
the inequality $\psi(\overline{x},\overline{y})\geq \psi\left(\overline{x},\overline{y}-\frac{h}{f(\overline{y})}\sum \tilde{a}^k \sigma_k(\overline{y})\right)$ gives
\begin{multline}
-W(\overline{y})-L_1\frac{|\overline{x}-\overline{y}-\sqrt{h}\eta|^2}{2\sqrt{h}}-L_2  \sqrt{h}|\overline{y}-\widehat{x}|^2
\geq -W\left(\overline{y}-\frac{h}{f(\overline{y})}\sum \tilde{a}^k \sigma_k(\overline{y})\right)\\
-L_1\frac{\left|\overline{x}-h\overline{y}-\sqrt{h}\eta-\frac{\sum \tilde{a}^k \sigma_k(\overline{y})}{f(\overline{y})}\right|^2}{2 \sqrt{h}}-L_2 \sqrt{h} \left|\overline{y}-\widehat{x}-h\frac{\sum \tilde{a}^k \sigma_k(\overline{y})}{f(\overline{y})}\right|^2
\end{multline}
and then 
\begin{multline}
   W\left(\overline{y}-\frac{h}{f(\overline{y})}\sum \tilde{a}^k \sigma_k(\overline{y})\right)\\ \geq W(\overline{y})-\frac{L_1}{2\sqrt{h}}\left[ \left|\overline{x}-\overline{y}-\sqrt{h}\eta\right|^2 -   \left|\overline{x}-\overline{y}-\sqrt{h}\eta-\frac{\sum \tilde{a}^k \sigma_k(\overline{y})}{f(\overline{y})}\right|^2\right]\\+L_2 \sqrt{h}\left[ |\overline{y}-\widehat{x}|^2-\left|\overline{y}-\widehat{x}-\frac{\sum \tilde{a}^k \sigma_k(\overline{y})}{f(\overline{y})}\right|^2\right].
\end{multline}
Substituting the left hand side term with (\ref{2:pp}) and using the fact that for every $a,b,c\in\R^n$ we can prove that $|a-b|^2-|a-b-hc|^2=2h(a-b)\cdot c -h^2 |c|^2$,  we get
\begin{multline}
  W(\overline{y})\geq 1+\frac{L_1}{2 \sqrt{h^3}}\left[2h (\overline{x}-\overline{y}-\sqrt{h}\eta)\cdot \frac{ \sum \tilde{a}^k \sigma_k(\overline{y})}{f(\overline{y})}-h^2 \left| \frac{ \sum \tilde{a}^k \sigma_k(\overline{y})}{f(\overline{y})}\right|^2\right] \\
+\frac{L_2}{2\sqrt{h}}\left[2h (\overline{y}-\widehat{x})\cdot \frac{\sum \tilde{a}^k \sigma_k(\overline{y})}{f(\overline{y})}-h^2 \left| \frac{ \sum \tilde{a}^k \sigma_k (\overline{y})}{f(\overline{y})}\right|^2\right].
\end{multline}
Now, adding to (\ref{2:passs}) and using the estimations (\ref{est1}) and (\ref{est2})

\begin{multline}
 v(\overline{x})-W(\overline{y}) \leq \left(  \frac{L_1}{2} \sqrt{h}+\frac{L_2}{2}\sqrt{h^3}\right) \left|  \frac{\sum \tilde{a}^k \sigma_k(\overline{y})}{ f(\overline{y})}\right|^2
-\frac{L_1}{\sqrt{h}} (\overline{x}-\overline{y}-\sqrt{h}\eta)\\
\cdot \left(    \frac{\sum \tilde{a}^k \sigma_k(\overline{y})}{f(\overline{y})}- \frac{\sum \overline{a}^k \sigma_k(\overline{x})}{f_*(\overline{x})}\right) - L_2 \sqrt{h} (\overline{y}-\widehat{x})\cdot  \frac{\sum \overline{a}^k \sigma_k(\overline{x})}{f(\overline{x})}\\
\leq  \left(  \frac{L_1}{2} \sqrt{h}+\frac{L_2}{2}\sqrt{h^3}\right) \left|  \frac{\sum \tilde{a}^k \sigma_k(\overline{y})}{ f(\overline{y})}\right|^2\\-L_1 \frac{\epsilon}{2+\epsilon}  \left|   \frac{\sum \tilde{a}^k \sigma_k(\overline{y})}{f(\overline{y})}- \frac{\sum \overline{a}^k \sigma_k(\overline{x})}{f_*(\overline{x})}\right| 
 -L_2 \sqrt{h}\epsilon \left|  \frac{\sum \overline{a}^k \sigma_k(\overline{x})}{f(\overline{x})} \right|.
\end{multline}
\noindent Finally, choosing $\epsilon=\sqrt{h}$  by the boundedness of $f$ and $\sigma$, we obtain
\begin{equation}
v(\overline{x})-W(\overline{y})\leq C \sqrt{h} 
\end{equation}
where  $C$ is a  suitable positive constants.
Then the inequality $\psi(\overline{x},\overline{y})\geq \psi(x,x)$ yields
\begin{equation}\label{2:pasfin1}
v(x)-W(x)\leq v(\overline{x})-W(\overline{y})\leq C \sqrt{h}
\end{equation}
for all $x\in\widehat{\Omega}_j$. 

Finally we consider the case when $\overline{y}\in \partial G_j$ or $\overline{x}\in \partial \widehat{\Omega}_j$. If $\overline{y}\in \partial G_j$.  the Dirichlet conditions imply that $v(\overline{y})=W(\overline{y})$ and we have 
\begin{multline}
v(\widehat{x})-W(\widehat{x})\leq v(\widehat{x}-\sqrt{h}\eta)-v(\widehat{x})+v(\overline{y})-v(\overline{x})\\
\leq L_v (\sqrt{h}+|\overline{x}-\overline{y}|)\leq L_v (2\sqrt{h}+|\overline{x}-\overline{y}-\sqrt{h}\eta|)\leq C\sqrt{h}.
\end{multline}
In a similar way we can treat the case $\overline{x}\in\partial \widehat{\Omega}_j$.\par
To prove the inequality $W(x)-v(x)\leq C \sqrt{h}$ it is enough to interchange the roles of $v$ and $W$ on the auxiliary function $\psi$.\par
We add this estimation  in (\ref{pass1}), getting the thesis 
\begin{equation}
||v(x)-W(x)||_{L^1}\leq C\sqrt{h}+ C'\Delta x.
\end{equation}

\end{proof}

\section{Numerical experiments and applications} \label{sec:tests} 

In this section we present some results for (\ref{EK}) on some test problems coming from front propagation, control theory and image processing. In all these examples the discontinuity of the coefficients appears in a natural way and has an easy interpretation with respect to the model.

\subsection{Test 1: a front propagation problem}

Front propagation problems arise in a lot of different fields of mathematics. A typical approach is to use the Hamilton-Jacobi framework to solve them, as in \cite{OS88}. Our first test can be interpreted as a front propagation in a discountinuous media. In this model, the level sets of the value function have the meaning of the regions with the same time of arrival of the front. \par
Let $\Omega:=(-1,1)\times(0,2)$ and $f:\Omega \rightarrow \R$ be defined by

\begin{figure}[tb]
\begin{center}
\includegraphics[height=6cm]{./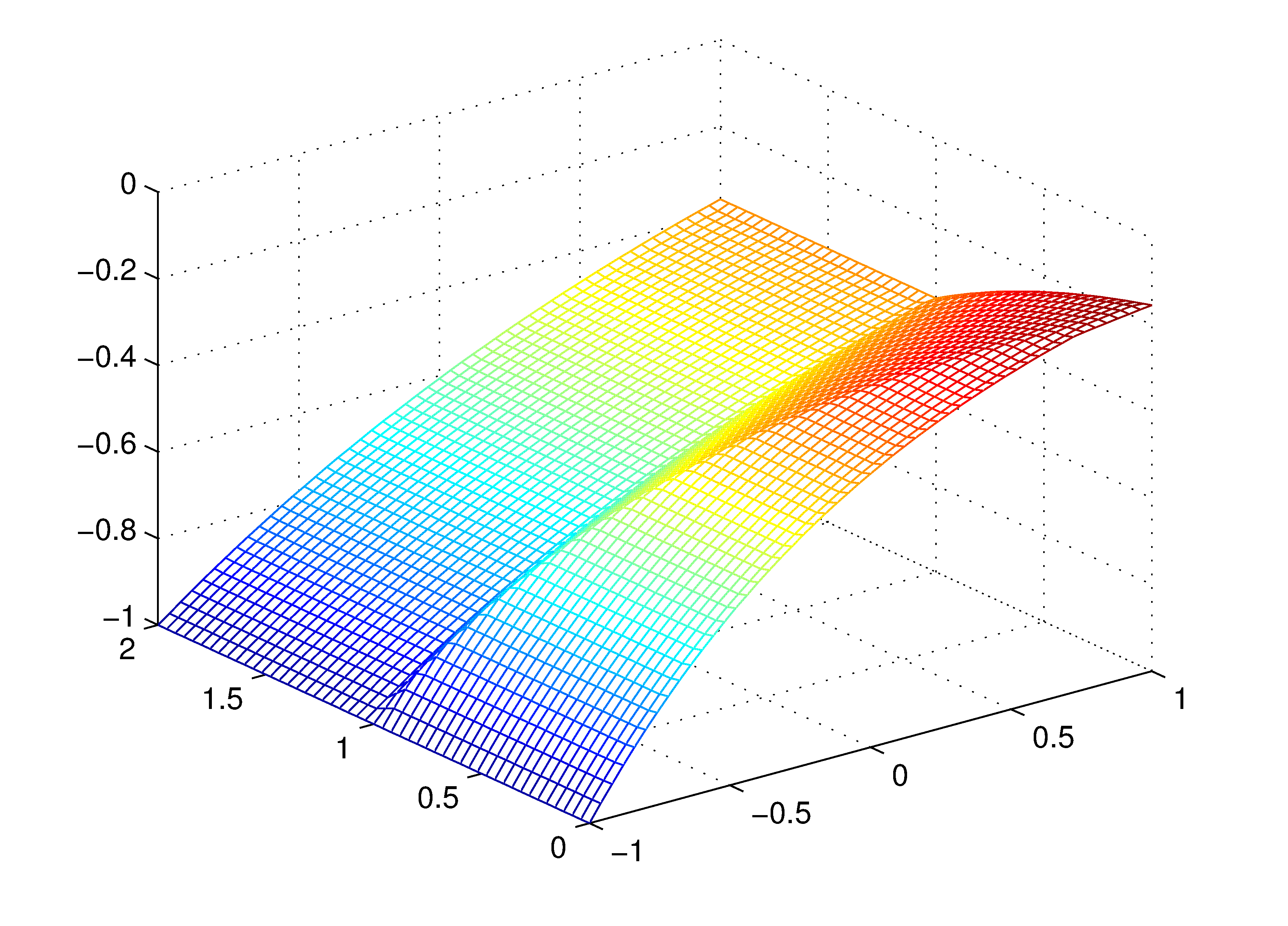}  \\
\includegraphics[height=6cm]{./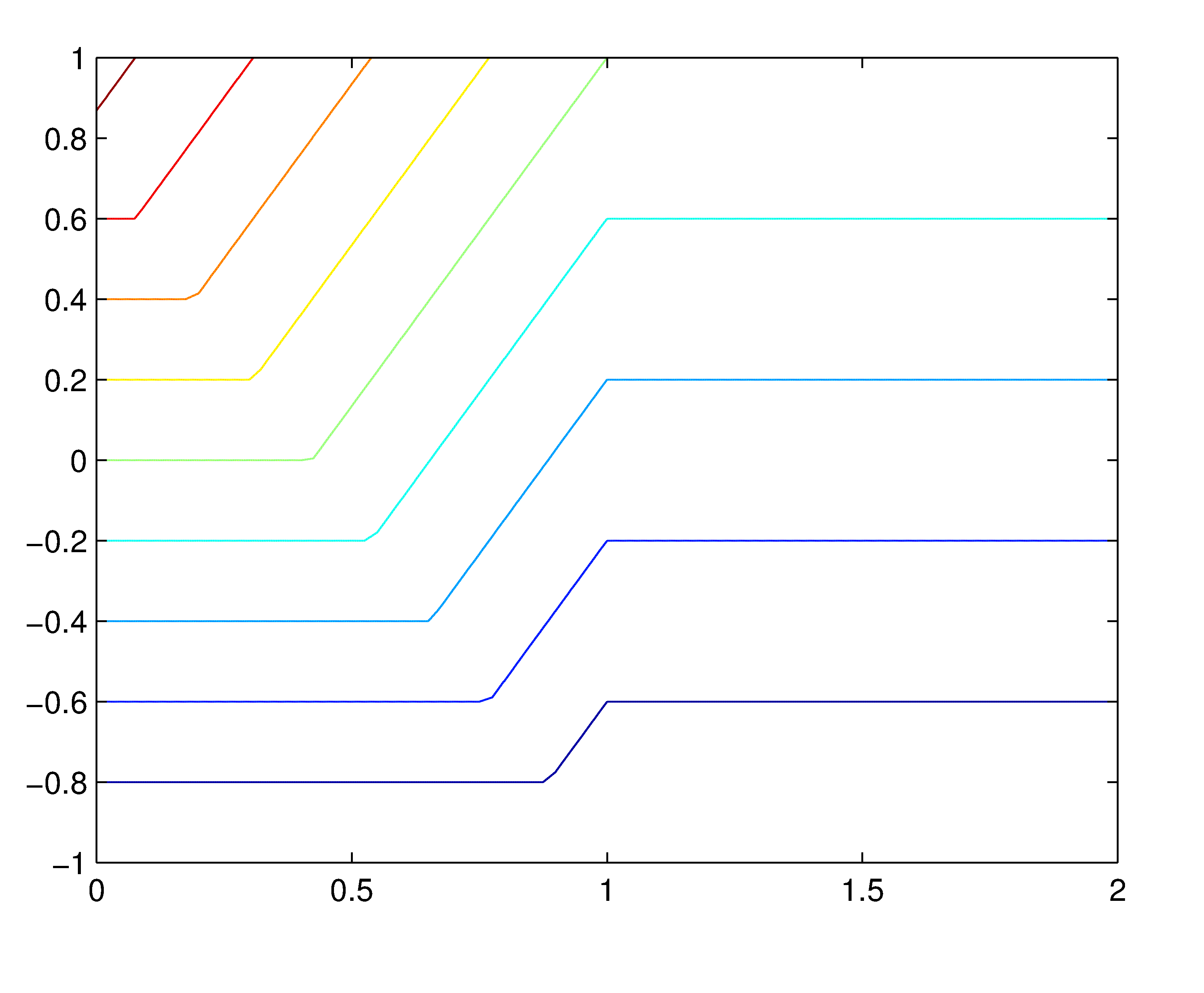}  \\
\caption{Test 1.} \label{t:3}
\end{center}
\end{figure}

\begin{equation}\label{test1}
  f(x_1,x_2):=\left\{ 
  \begin{array}{ll}
  1 & x_1<0,\\
  3/4 & x_1=0 \\
  1/2 & x_1>0
  \end{array}
  \right.
\end{equation}
It is not difficult to see that $f$ satisfies conditions  (\ref{nocusp}). We can verify that the function

\begin{equation}
  u(x_1,x_2):=\left\{ 
  \begin{array}{ll}
  \frac{1}{2} x_2, & x_1\geq 0,\\
  \\
    -\frac{\sqrt{3}}{2}x_1+\frac{1}{2}x_2, & -\frac{1}{\sqrt{3}}x_2\leq x_1 \leq 0,\\
    \\
  x_2,& x_1< -\frac{1}{\sqrt{3}}x_2.
  \end{array}
  \right.
\end{equation}
is  a viscosity solution of $|Du|=f(x)$ in the sense of our definition. Moreover, we take $g:= u_{|\partial \Omega}$. We show in the Table \ref{tt:3} and in Figure \ref{t:3} our results.\par
\begin{table}
\begin{center}
\begin{tabular}{||p{2.3cm}||*{4}{c|}|}
\hline
         $\Delta x=h $      &   $||\cdot||_\infty$  &$Ord(L_\infty)$&  $||\cdot||_1$ &  $Ord(L_1)$ \\
\hline
\hline
\bfseries 0.1  & 1.734e-1  &     &  8.112e-2&  \\
\hline
\bfseries 0.05 & 8.039e-2  &  1.1095 &  3.261e-2  &  1.3148\\
\hline 
\bfseries 0.025  &  4.359e-2 &  0.8830   &   1.616e-2 &  1.0178\\
\hline
\bfseries 0.0125  &  2.255e-2   &    0.9509   &   7.985e-3  &   1.0271\\
\hline
\end{tabular}

\caption{Test 1: experimental error.}\label{tt:3}
\end{center}

\end{table}

We also show, in Table \ref{ttc:4} a comparison between the FD methods proposed in \cite{DE04}. They proposed two techniques: in the first there is a regularization of the Hamiltonian with a viscosity term ($DF-reg$), in the second one ($DF-FS$), better results are obtained, but numerically there are more difficulties; the authors solve them using $Fast Sweeping$ (see \cite{Z04}) as acceleration technique and they archive very good results. Our technique has, in this test, a performance similar to $DF-reg$, in our scheme, the interpolation operator (in this case bilinear) adds a regularization. We aspect better performances of our method rather DF techniques on more complicated cases, where characteristics are not straight lines.

\begin{table}
\begin{center}
\begin{tabular}{||p{2.3cm}||*{6}{c|}|}
\hline
         $\Delta x=h $      &   our method  &$Ord$&  DF-reg &  $Ord$ &  DF-FS &  $Ord$\\
\hline
\hline
\bfseries 0.1  & 1.734e-1  &     &  1.243e-1&      &  5.590e-2& \\
\hline
\bfseries 0.05 & 8.039e-2  &  1.1095 &  7.229e-2  &  0.78    &  2.795e-2&  1.00\\
\hline 
\bfseries 0.025  &  4.359e-2 &  0.8830   &   4.085e-2 &  0.82     &  1.397e-2& 1.00 \\
\hline
\bfseries 0.0125  &  2.255e-2   &    0.9509   &   2.266e-2  &   0.85    &  3.493e-3& 1.00 \\
\hline
\end{tabular}

\caption{Test 1: comparison between different numerical methods (uniform norm).}\label{ttc:4}
\end{center}
\end{table}

\subsection{Test 2: a control problem with a discontinuous value function}

\begin{figure}[tb]
\begin{center}
\includegraphics[height=6cm]{./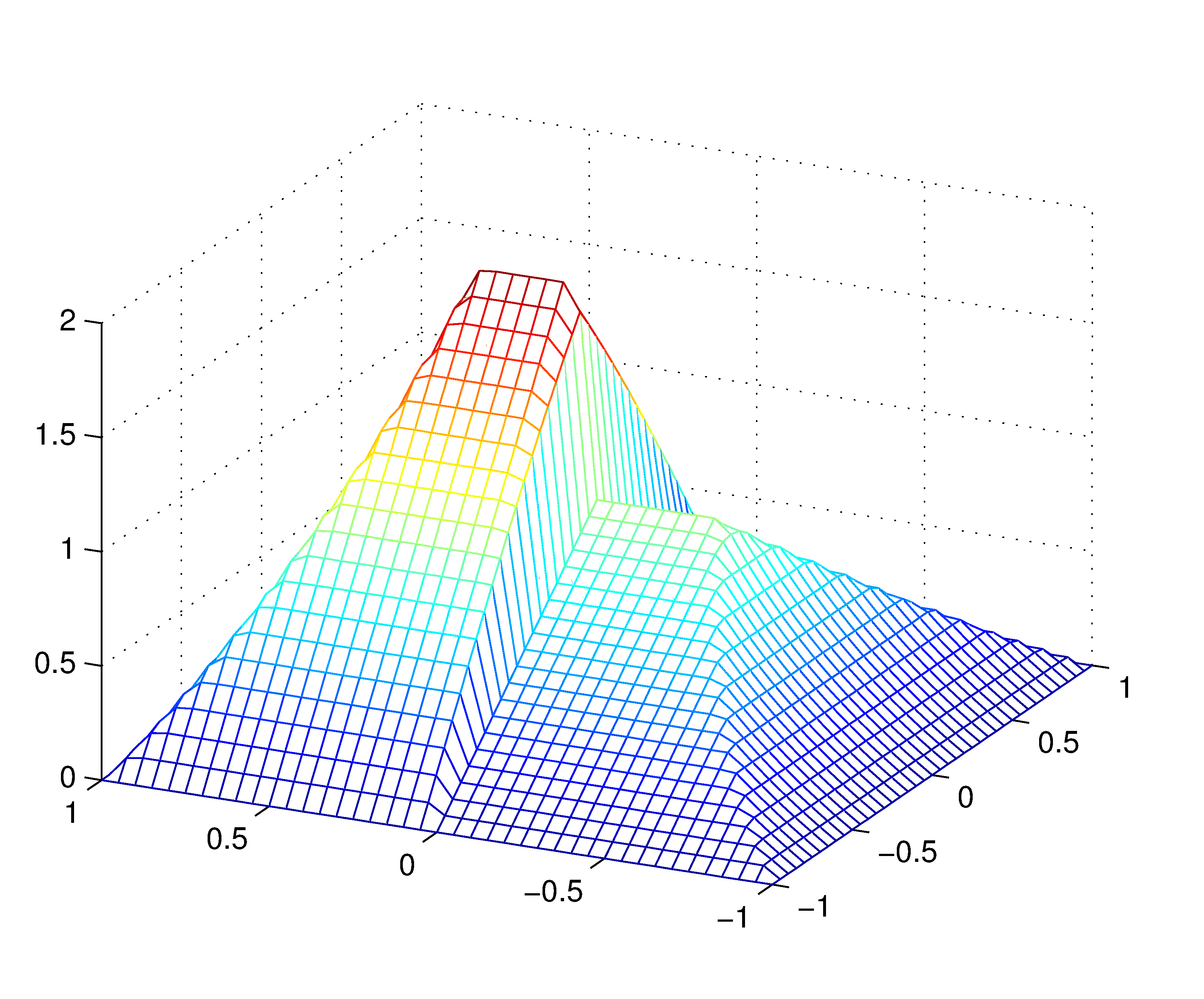}  \\
\includegraphics[height=6cm]{./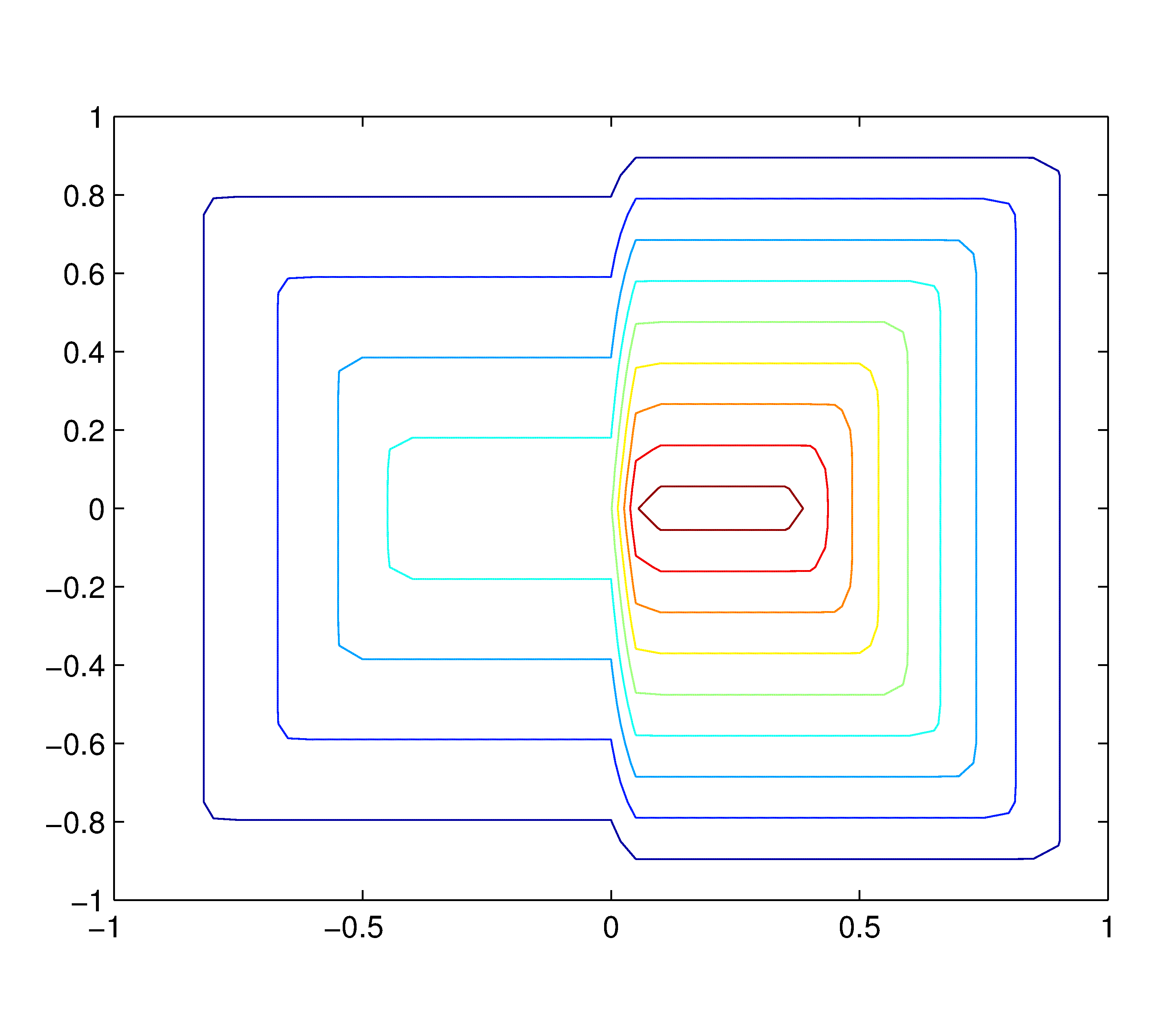}  \\
\caption{Test 2.} \label{t:5}
\end{center}
\end{figure}

In this test we present a case where a continuous solution does not exist. In this case it is evident that a convergence in uniform norm will not be possible. \par
  We consider the problem shown in the example \ref{ex2}. As already said, let $\Omega:=[-1,1]^2$ we want to solve 
\begin{equation}
\left\{
\begin{array}{cc}
x^2\left(u_x(x,y)\right)^2+\left(u_y(x,y)\right)^2=\left[f(x,y)\right]^2 & ]-1,1[\times]-1,1[\\
u(\pm 1,y)=u(x,\pm 1)=0 & x,y\in[-1,1]
\end{array}
\right.
\end{equation}
with $f(x, y) = 2$, for $x > 0$, and $f(x, y) = 1$ for $x \leq 0$. The correct viscosity solution is

\begin{equation}
u(x,y)=\left\{
\begin{array}{cc}
2(1-|y|) & x> 0, |y|>1+\ln x\\
-2ln(x) & x> 0, |y|\leq 1+\ln x\\
\frac{u(-x,y)}{2} & x\leq 0.
\end{array}
\right.
\end{equation}

We show in Figure \ref{t:5} our results. In this case the convergence in the uniform norm fails. Convergence in the integral norm $L^1$ as proved in Section \ref{sec:conv} is confirmed by Table \ref{tt:5}.

\begin{table}
\begin{center}
\begin{tabular}{||p{3.3cm}||*{4}{c|}|}
\hline
         $\Delta x=h $      &   $||\cdot||_\infty$  &$Ord(L_\infty)$&  $||\cdot||_1$ &  $Ord(L_1)$ \\
\hline
\hline
\bfseries 0.2  & 1.0884  &     &  0.4498&  \\
\hline
\bfseries 0.1  & 1.0469  &   -  &  0.2444& 0.88 \\
\hline
\bfseries 0.05 & 1.0242  &  - &  0.1270  &  0.9444\\
\hline 
\bfseries 0.025  &  1.0123 &  -   &   0.0628 &  0.9708\\
\hline
\bfseries 0.0125  &  1.0062  &    -   &   0.0327  &   0.9867\\
\hline
\bfseries 0.00625  &  1.0031  &    -   &   0.0221  &   0.5652\\
\hline
\end{tabular}

\caption{Test 2: experimental error.}\label{tt:5}
\end{center}
\end{table}

\subsection{Test 3: Shape-from-Shading with discontinuous brightness}

The Shape-from-Shading problem consists in reconstructing the three dimensional shape
of a scene from the brightness variation (shading) in a greylevel photograph of that scene.
The study of the Shape-from-Shading problem started in the 70s (see \cite{HB89} and references
therein) and since then a huge number of papers have appeared on this subject. More
recently, the mathematical community was interested in Shape-from-Shading since its
formulation is based on a first order partial differential equation of Hamilton-Jacobi type (see \cite{RT92,O99}).\par
The equation related to this problem is the following: for  a brightness function $I(x,y):\R^2\supset\Omega \rightarrow [0,1]$, in the case of 
vertical light source is vertical, to reconstruct the unknown surface, we need to solve
\begin{equation}\label{5:sfseik}
  |Du(x,y)|= \left(\sqrt{\frac{1}{I(x,y)^2}-1}\right), \quad (x,y)\in \Omega.
\end{equation}
Points $(x,y)$ where $I$ is maximal (i.e. equal to 1) correspond to the particular situation
when the light direction and $n$ are parallel. These points are usually called ``singular
points'' and, if they exist, equation \eqref{5:sfseik} is said to be degenerate. The
notion of singular points is strictly related to that of concave/convex ambiguity, we refer to \cite{LRT93,IR95} for details on this point.\par

It is important to note that, whatever the final equation is, in order to compute a solution
we will have to impose some boundary conditions on $\partial \Omega$ and/or inside $\Omega$. A natural choice
is to consider Dirichlet type boundary conditions in order to take into account at least two
different possibilities. The first corresponds to the assumption that the surface is standing
on a flat background, i.e. we set $u(x,y) = 0$  for $(x,y)\in \partial \Omega$. The second possibility occurs when the height of the surface on the boundary ({\em silhouette}) is known: $u(x,y) = g(x,y)$  for $ (x,y)\in \partial \Omega$.
The above boundary conditions are widely used in the literature although they are often
unrealistic since they assume a previous knowledge of the surface.\par
Let us focus on two important points: 
\begin{itemize}
\item We note that a digital image is always a discontinuous datum. Is is a piecewise constant function with a fixed measure of his domain of regularity (pixel). So this is the interest of our analysis for discontinuous cases of $f$.
\item In the case of maximal gray tone ($I(x)=1$), we are not in the Hypothesis introduced previously. In particular we have that $f=0$ in some points. We overcome this difficulty, as suggest in \cite{CG00}. We regularize the problem making a truncation of $f$. It is possible to show that this regularized problem goes to the maximal subsolution of the problem with $\epsilon \rightarrow 0^+$. And that this particular solution is the correct one from the applicative point of view.\par
\end{itemize}

We consider, now a test with a precise discontinuity on $I$, and we will discuss some issue about this case.\par
We firstly consider a simple problem in 1D to point out an aspect of the model. Let the function $I$ be 
\begin{equation}
I=\left\{ \begin{array}{ll}
\sqrt{1-x^2} & \hbox{if } -1 \leq x \leq 0.2\\
\frac{\sqrt{2}}{2} & \hbox{if } 0.2 \leq x \leq 1\\
0 & \hbox{otherwise }
\end{array} \right.
\end{equation}
we can see that we have a discontinuity on $x=0.2$; despite this, because of the non degeneracy of the dynamics, the solution will be continuous. For this reason we can see that changing the boundary condition of the problem, the solution will be the maximal Lipschitz solution that verifies continuously the boundary condition. 

\begin{figure}[ht]
\begin{center}
\includegraphics[width=8cm]{./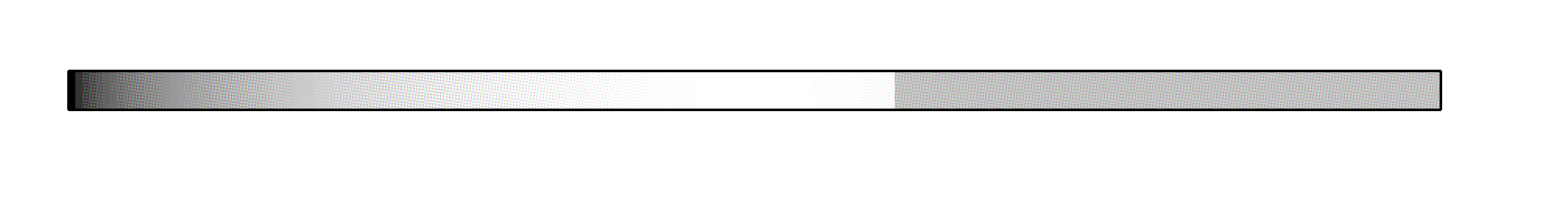}  \\
\includegraphics[width=8cm]{./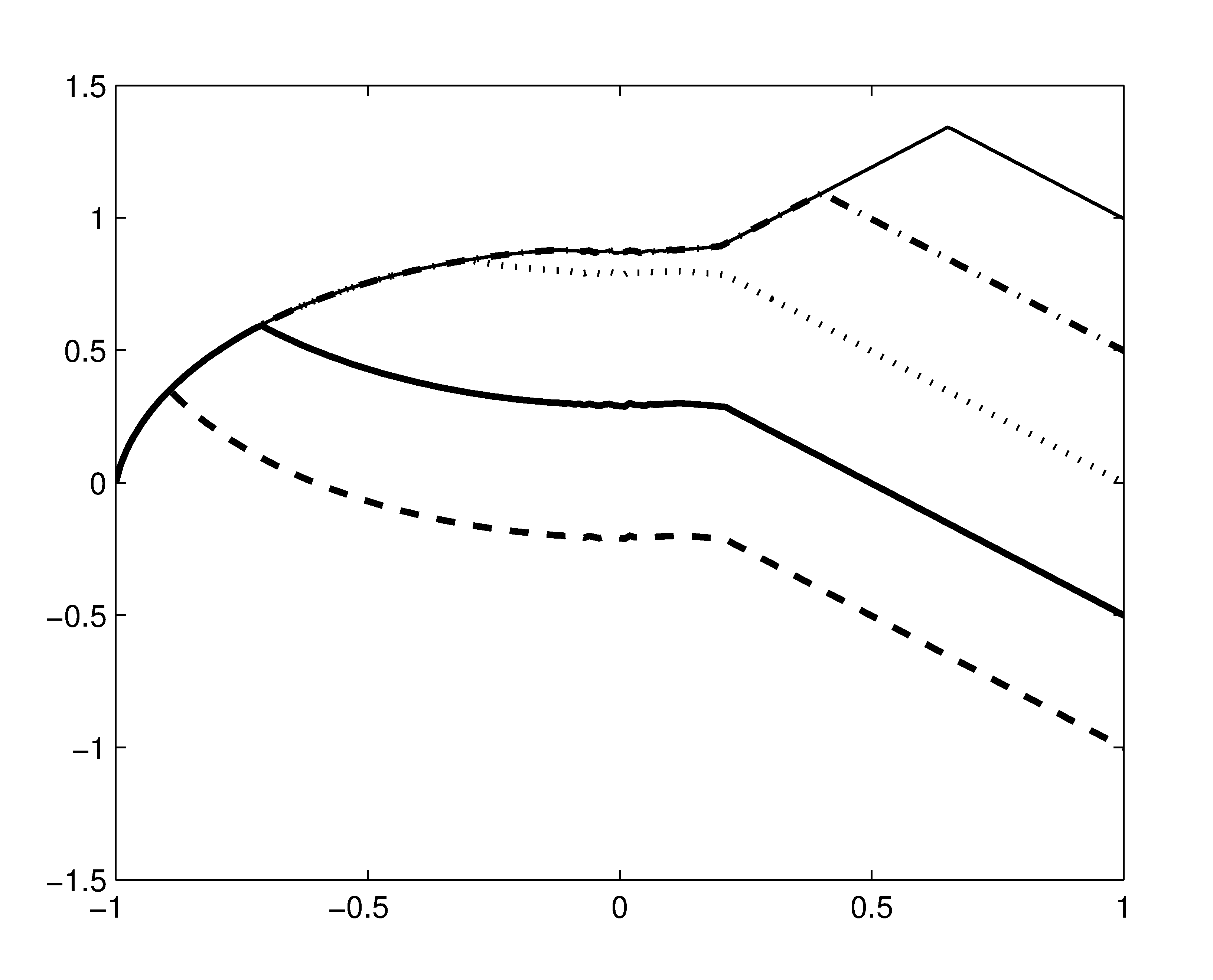}  \\
\caption{Sfs-data and solution with various boundary values.} \label{5:testbound}
\end{center}
\end{figure}

To see this we have solved this simple monodimensional problem with various Dirichlet condition, in particular we require $u(-1)=0$, and $u(1)=\{-1,0.5,0,0.5,1\}$. With $\Delta x=0.01$ and $\Delta t=0.002$, we obtain the results shown in Figure \ref{5:testbound}.\par
We can realize, in this way, an intrinsic limit of the model. It can not represent an object with discontinuities. We make another example that is more complicated and more close to a real application. \par
\begin{figure}[ht]
\begin{center}
\includegraphics[height=5.5cm]{./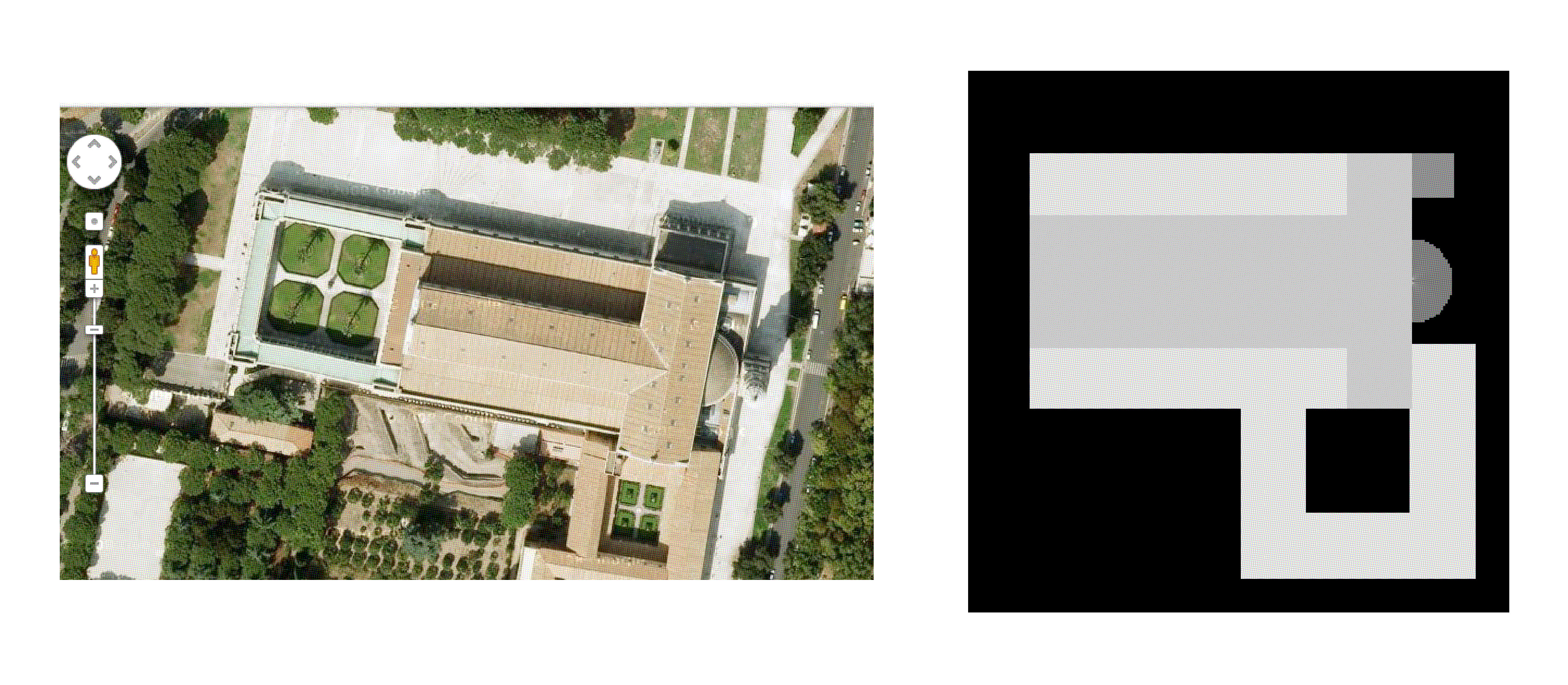}  \\
\caption{Basilica of Saint Paul Outside the Walls: satellite image and simplified sfs-datum.} \label{5:Sanpaolo}
\end{center}
\end{figure}
We consider a simplified sfs-datum for the Basilica of Saint Paul Outside the Walls in Rome, as shown in Figure \ref{5:Sanpaolo}. We have not the correct boundary value on the silhouette of the image and on the discontinuities, so we impose simply $u\equiv 0$ on the boundary. Computing the equation with $\Delta t=0.001$ we get the solution described on Figure \ref{5:sp1}.
\begin{figure}[ht]
\begin{center}

\includegraphics[height=8cm]{./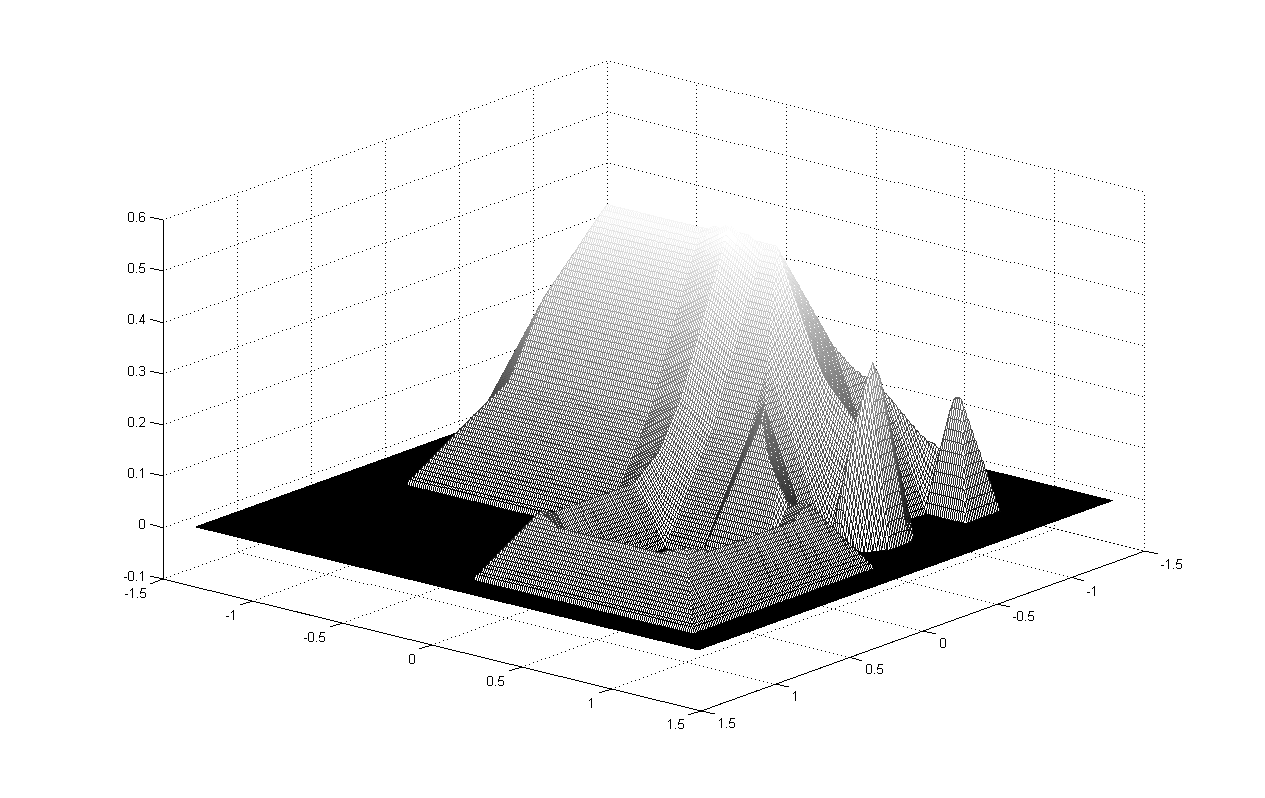}  \\
\caption{Test 3: reconstructed shape without boundary data.} \label{5:sp1}
\end{center}
\end{figure}

We can see that, although the main features of the shape as the slope of the roofs, the points of maximum are well reconstructed. Despite it, the shape which we get is not so close to our expectations. We can try to get better results adding the correct height of the surface along the silhouette as discussed above and, in this case, we get the solution shown on Figure \ref{5:sp2}. We can notice a more convincing shape, but also in this case it is quite not satisfactory. For example we have that the correct boundary conditions we imposed are not attained, and we create some discontinuity on some parts of them. This is due to the fact that they can be not compatible with the statement of the problem. Essentially the limit which we can see, as described above, is that we cannot have discontinuity on the viscosity solution (Theorem \ref{lipreg}). \par

We propose a different model for this problem, which allows discontinuous solutions. At this point we do not care about the physical interpretation of it, instead we are trying to find a solution closer to the correct solution.  We want to solve the equation
\begin{equation}
\left\{ \begin{array}{ll}
\max\limits_{|a|\leq 1}\left\{-Du(x)\cdot \sum\limits_{k=1}^2 a^k \sigma_k(x,y)\right\}=\sqrt{\frac{1}{I^2(x,y)}-1} & x\in\Omega\\
u(x)=g(x) & x\in\partial\Omega
\end{array} \right.
\end{equation}
with the map $\sigma:\Omega\rightarrow \R^{2,2}$ is
\begin{equation}
 \left(\begin{array}{cc}\left(1+\left|I(x-h,y)-I(x+h,y)\right|\right)^{-p} & 0 \\  0 & \left(1+\left|I(x,y-h)-I(x,y+h)\right|\right)^{-p}\end{array}\right)
\end{equation}
where $p\in \R$ is a tuning parameter. Obviously this choice of the anisotropic evaluator $\sigma$ is a bit trivial. This pick is done for the sake of simplicity. More complicated proposal can be found for example in \cite{AK01}.\par
In this way we use the results about the degeneracy of the dynamics permitting to the viscosity solution to be discontinuous. Of course this is, in some sense, the opposite situation with respect to the classical formulation: in this case every non smooth point of the surface is interpreted as discontinuity and we try to reconstruct it using the data coming from the silhouette.\par
The results are shown in Figure \ref{5:sp3} and in Table \ref{tt:6}
 we can see an accuracy comparison of   the various procedure.

\begin{figure}[ht]
\begin{center}
\includegraphics[height=7cm]{./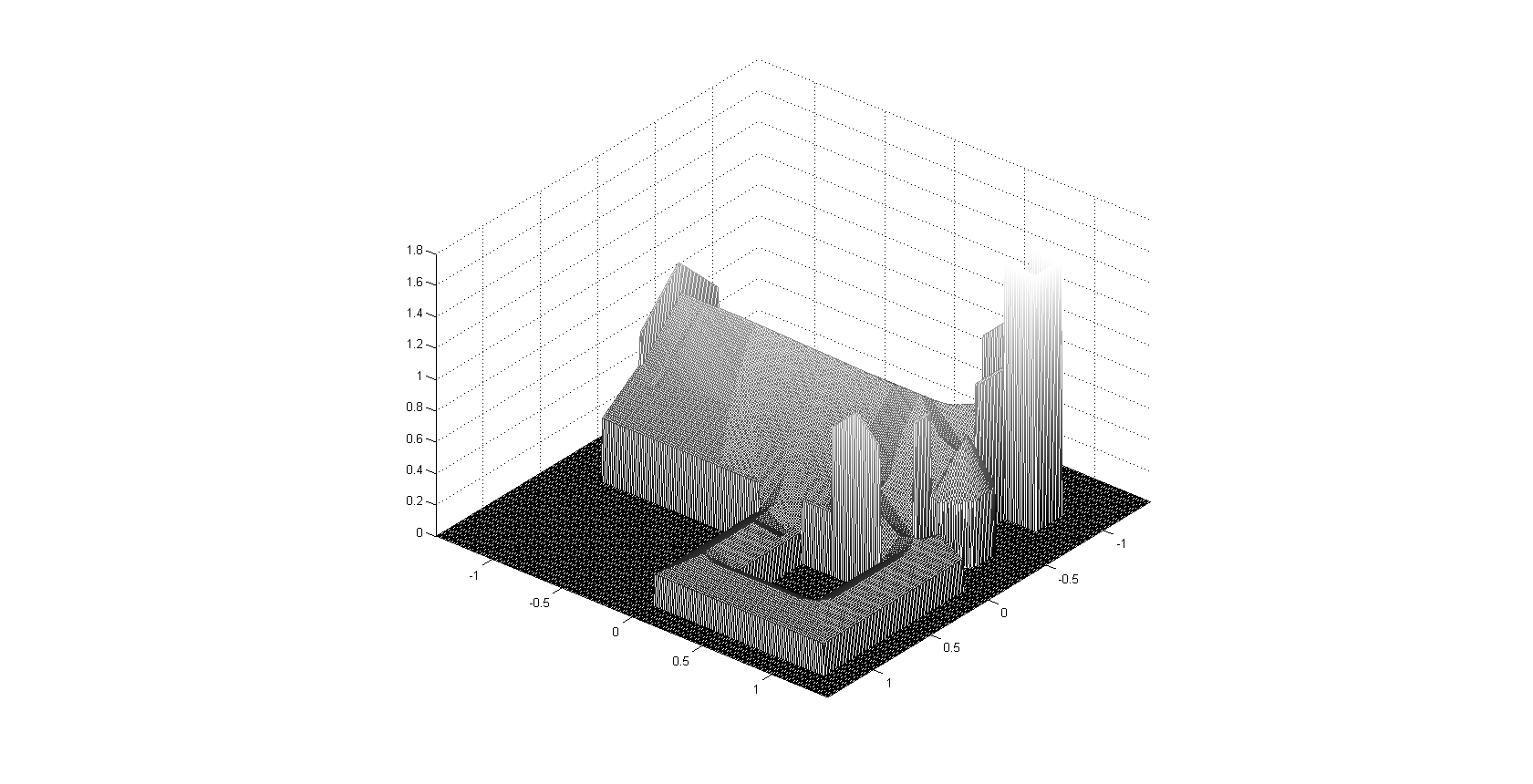}  \\
\caption{Test 3: Dirichlet condition on the silhouette.}   \label{5:sp2}
\includegraphics[height=7cm]{./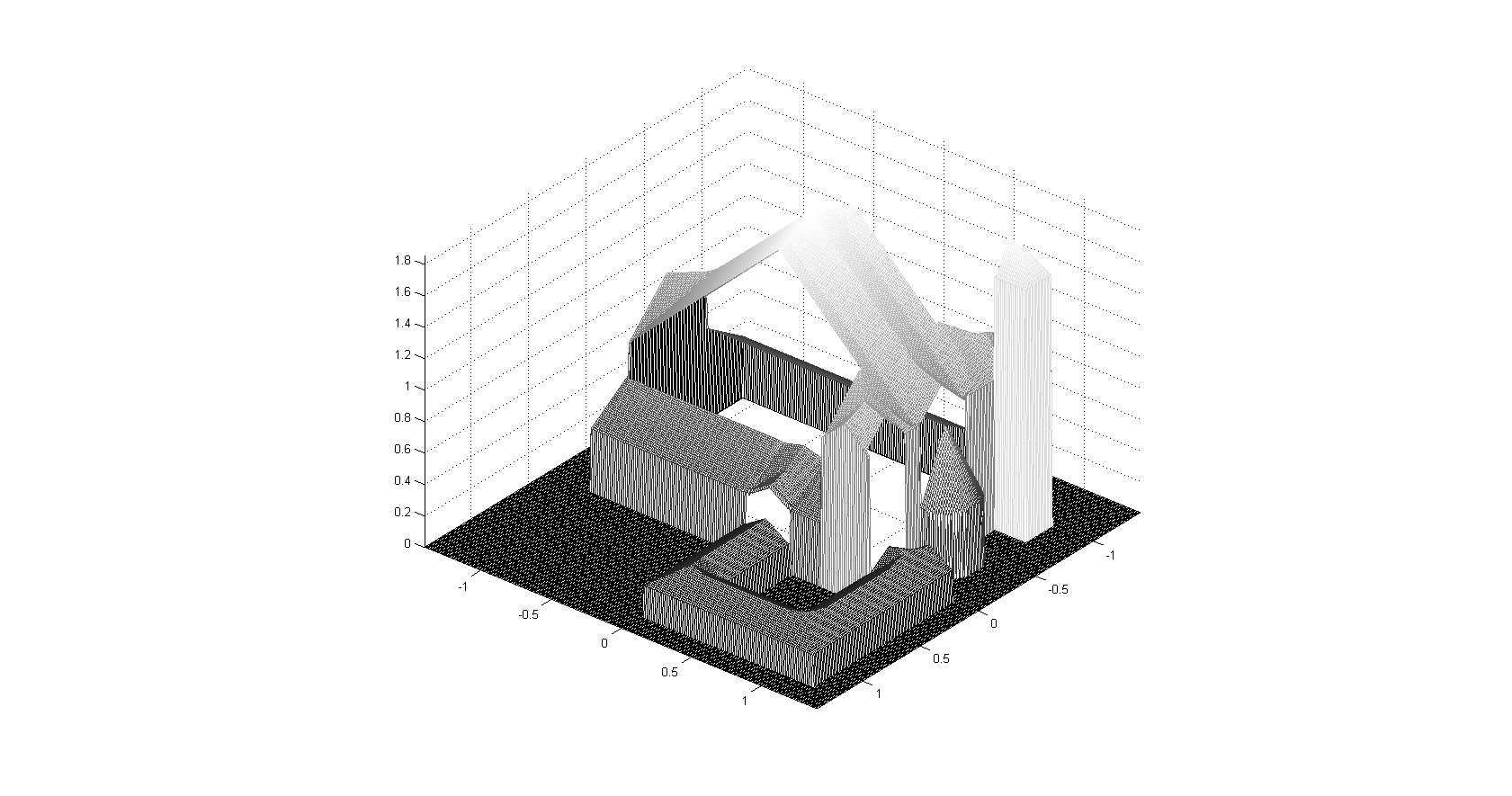}  \\
\caption{Test 3: Dirichlet condition and discontinuous dynamics.} \label{5:sp3}
\end{center}
\end{figure}

\begin{table}
\begin{center}
\begin{tabular}{||p{6.3cm}||*{2}{c|}|}
\hline
      Test      &  $||\cdot||_\infty$ & $||\cdot||_{L^1}$ \\
\hline
\hline
\bfseries w/o correct boundary data & 1.7831  &    1.5818 \\
\hline
\bfseries w boundary data  & 0.8705  &  0.5617 \\
\hline
\bfseries w boundary + disc. detect. & 0.7901 &  0.3062\\
\hline 

\end{tabular}

\caption{Test 3: Comparison between various methods} \label{tt:6}
\end{center}
\end{table}

\end{document}